\newtheorem{theorem}{Theorem}[section]
\newtheorem{lemma}[theorem]{Lemma}
\newtheorem{proposition}[theorem]{Proposition}
\newtheorem{corollary}[theorem]{Corollary}
\newtheorem*{theorem*}{Theorem}
\theoremstyle{remark}
\newtheorem{remark}[theorem]{Remark}
\newtheorem{question}{Question}
\numberwithin{equation}{section}
\begin{document}
\title{On curves and polygons with the equiangular chord property}

\author{Tarik Aougab, Xidian Sun, Serge Tabachnikov, Yuwen Wang}

\address{Department of Mathematics \\ Yale University \\ 10 Hillhouse Avenue, New Haven, CT 06510 \\ USA}
\email{tarik.aougab@yale.edu}

\address{Department of Mathematics \\ Wabash College \\ 301 W. Wabash Avenue, Crawfordsville, IN 47933 \\ USA}
\email{xsun15@wabash.edu}

\address{Department of Mathematics \\ Penn State University \\ University Park, State College 16802 \\ USA\\
and ICERM\\ Brown University\\ Box 1995, Providence, RI 02912\\ USA}
\email{tabachni@math.psu.edu}

\address{Department of Mathematics and Statistics \\ Swarthmore College \\ 500 College Avenue, Swarthmore, PA 19081 \\ USA}
\email{ywang3@swarthmore.edu}

\date{\today}

\keywords{mathematical billiards, capillary floating problem, geometric rigidity}

\begin{abstract}
Let $C$ be a smooth, convex curve on either the sphere $\mathbb{S}^{2}$, the hyperbolic plane $\mathbb{H}^{2}$ or the Euclidean plane $\mathbb{E}^{2}$, with the following property: there exists $\alpha$, and parameterizations $x(t), y(t)$ of $C$ such that for each $t$, the angle between the chord connecting $x(t)$ to $y(t)$ and $C$ is $\alpha$ at both ends. 

Assuming that $C$ is not a circle,  E. Gutkin completely characterized the angles $\alpha$ for which such a curve exists in the Euclidean case. We study the infinitesimal version of this problem in the context of the other two constant curvature geometries, and in particular we provide a complete characterization of the angles $\alpha$ for which there exists a non-trivial infinitesimal deformation of a circle through such curves with corresponding angle $\alpha$. We also consider a discrete version of this property for Euclidean polygons, and in this case we give a complete description of all non-trivial solutions.

\end{abstract}

\maketitle

\hfill{ \it To the memory of Eugene Gutkin}

\section{Introduction}
 Given a smooth, convex oriented closed curve $C$ in the Euclidean plane $\mathbb{E}^{2}$ and $x,y\in C$, $x\neq y$, let $|xy|$ denote the oriented chord connecting $x$ to $y$.  Motivated by his study of mathematical billiards, E. Gutkin asked the following question \cite{Gu1}: 

\begin{question}   Assume the existence of  parameterizations $x(t), y(t)$ of $C$ such that for each $t$,
\begin{enumerate}
\item $x'(t),y'(t) >0$;
\item $x(t) \neq y(t)$; 
\item there exists $\alpha \in (0,\pi]$ such that both angles between $C$ and $|x(t)y(t)|$ is $\alpha$. 
\end{enumerate}

Then if $C$ is not a circle, what are all possible values of $\alpha$?

\end{question}

Gutkin provides a complete answer to Question $1$ by establishing the following necessary and sufficient condition for $\alpha$: there
exists an integer $k \geq 2$ such that 
\begin{equation} \label{restr1}
k \tan \alpha=\tan{(k\alpha)},
\end{equation}
see \cite{Gu1},\cite{Gutkin},\cite{Tabachnikov1}. In particular, only a countable number of values of the angle $\alpha$ are possible.

In terms of billiards, the billiard ball map on the interior of $C$ has a horizontal invariant circle given by the condition that the angle made by the trajectories with the boundary of the table is equal to $\alpha$. This statement can also be interpreted in terms of capillary floating with zero gravity in neutral equilibrium, see \cite{RFinn1,RFinn2}.

We call a curve satisfying this equiangular chord property a \textit{Gutkin curve}; we will refer to the corresponding angle $\alpha$ as the {\it contact angle}. 

We generalize Gutkin's theorem in two directions: to curves in the standard $2$-sphere $\mathbb{S}^2$ and the hyperbolic plane $\mathbb{H}^2$, and to polygons in $\mathbb{E}^2$ via a discretized version of Question $1$. For $\mathbb{S}^{2}$ and $\mathbb{H}^{2}$, we consider the following infinitesimal version of Gutkin's question:

\begin{question} In either $\mathbb{H}^{2}$ or $\mathbb{S}^{2}$, for which angles $\alpha$ are there non-trivial infinitesimal deformations of a radius $R$ circle through Gutkin curves with contact angle $\alpha$?

\end{question}

Here, a \textit{non-trivial deformation} of a circle is a deformation that does not correspond to a circle solution (of a different radius). 

Our first result yields an answer to Question $2$:

\begin{theorem} \label{Conscurv}
Assume that a circle of radius $R$ in $\mathbb{S}^2$ or in $\mathbb{H}^2$ admits a non-trivial infinitesimal deformation
through Gutkin curves with contact angle $\alpha$. Define  angles $c$ via
$$
\cot c = \cos R \cot \alpha,
$$
in the spherical case, and 
$$
\cot c = \cosh R \cot \alpha,
$$
in the hyperbolic case. 
Then    there exists $k\in \mathbb{N}, k\geq 2$, such that the following
equation holds:
$$
k \tan c = \tan kc.
$$
\end{theorem}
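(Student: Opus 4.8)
My plan is to linearize the equiangular-chord condition about the circle and diagonalize the resulting operator using the rotational symmetry of the circle. The first step is geometric: I would show that the auxiliary angle $c$ is exactly half of the central angle subtended at the center of the circle by an equiangular chord. Let $O$ be the center of a geodesic circle $C$ of radius $R$, and let $A,B$ be the endpoints of a chord meeting $C$ at angle $\alpha$. The triangle $OAB$ is isosceles with $OA=OB=R$; dropping the perpendicular from $O$ to the midpoint of $AB$ produces a right triangle with hypotenuse $R$, one acute angle equal to the half-central angle $\phi$, and the other equal to $\pi/2-\alpha$ (the complement of the contact angle, since the tangent to $C$ at $A$ is orthogonal to the radius $OA$). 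The right-triangle relation $\cos R=\cot\phi\,\cot(\pi/2-\alpha)$ on $\mathbb{S}^2$ (respectively $\cosh R=\cot\phi\,\cot(\pi/2-\alpha)$ on $\mathbb{H}^2$) then reduces precisely to $\cot\phi=\cos R\cot\alpha$ (respectively $\cot\phi=\cosh R\cot\alpha$), so that $c=\phi$. In particular, on the undeformed circle the chord map $x\mapsto y$ sending each point to the far endpoint of its equiangular chord is simply the rotation by the arc of central angle $2c$.

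Next I would parameterize nearby convex curves by a normal (geodesic) displacement $\epsilon\,h(\theta)$, where $\theta$ is the central-angle coordinate along $C$, and impose that the deformed curve $C_\epsilon$ still carries a chord system meeting it at angle $\alpha$ at both ends. Shooting from the point at parameter $\theta$ the chord that leaves $C_\epsilon$ at angle $\alpha$, I would compute to first order in $\epsilon$ the point and the angle at which it returns to $C_\epsilon$; the equiangular property is equivalent to the returning angle also equaling $\alpha$. Using the first-variation formulas for the length and the endpoint angles of a geodesic chord under normal perturbation of its endpoints (equivalently, Jacobi fields along the chord), this produces a linear functional equation $(\mathcal{L}h)(\theta)=0$, where $\mathcal{L}$ couples the values and first derivatives of $h$ at $\theta$ and at the shifted argument $\theta+2c$.

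Because the construction is invariant under rotations of the circle, the operator $\mathcal{L}$ commutes with the shift $\theta\mapsto\theta+s$ and is therefore diagonalized by the Fourier characters $e^{in\theta}$. On the $n$-th mode the shift by $2c$ acts as multiplication by $e^{2inc}$ and differentiation as multiplication by $in$, so $\mathcal{L}$ acts as a scalar multiplier $\mu_n(c)$ that I can write explicitly. A non-trivial infinitesimal deformation is precisely a non-zero element of $\ker\mathcal{L}$, i.e. a mode $n=k$ with $\mu_k(c)=0$. I expect the vanishing condition to simplify, after clearing trigonometric factors, to exactly $k\tan c=\tan kc$, matching Gutkin's Euclidean equation \eqref{restr1} with $\alpha$ replaced by $c$ and recovering it as $R\to 0$, since then $\cos R,\cosh R\to 1$ and $c\to\alpha$. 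Finally I would discard the modes $k=0,1$, which correspond to the trivial deformations (rescaling the radius and the ambient isometries), leaving the requirement $k\ge 2$.

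The main obstacle is the second step: carrying out the first-order variation of the contact angle correctly in the curved setting, where the chord is a geodesic and both of its endpoints move along $C_\epsilon$. The bookkeeping of how the exit angle depends on $h(\theta)$, $h'(\theta)$, $h(\theta+2c)$ and $h'(\theta+2c)$ must be done through the appropriate Jacobi-field and variation-of-geodesic computations rather than elementary planar trigonometry, and it is here that the factors $\cos R$ and $\cosh R$ enter. The payoff of first changing variables to the half-central angle $c$ is that, once the linearized equation is expressed in terms of $c$, its characteristic multiplier $\mu_k(c)$ becomes formally identical for $\mathbb{S}^2$ and $\mathbb{H}^2$ (and for the Euclidean limit), so that the single condition $k\tan c=\tan kc$ governs all three geometries.
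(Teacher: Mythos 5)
Your geometric identification of $c$ as the half central angle is correct (Napier's rule $\cos R=\cot\phi\cot(\pi/2-\alpha)$, resp.\ $\cosh R=\cot\phi\cot(\pi/2-\alpha)$, gives exactly $\cot c=\cos R\cot\alpha$, resp.\ $\cot c=\cosh R\cot\alpha$), and this matches what the paper proves by explicit parametrization in Lemma \ref{canda}. Your overall strategy --- linearize the equiangular condition about the circle and diagonalize by Fourier modes using rotational invariance --- is also the strategy of the paper. But the heart of the argument, namely the derivation of the linearized operator $\mathcal{L}$ and the verification that its multiplier on the $k$-th mode vanishes precisely when $k\tan c=\tan kc$, is exactly the step you defer (``I expect the vanishing condition to simplify\dots'') and identify as the main obstacle. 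That computation is where all the content lies: the paper gets there by using Bialy's formulas for the second partials of the chord length $L$, introducing the non-obvious parameterization $x'=a/\sqrt{\kappa^2(x)+\sin^2\alpha}$ and the substitution $\cot f=\kappa/\sin\alpha$, which converts the condition into the closed-form functional equation $a\cot\alpha\,(\sin f_1-\sin f_2)=f_1'+f_2'$ whose linearization is immediate. Without carrying out your Jacobi-field computation (or an equivalent), the proposal does not establish that the multiplier has the claimed form, so the proof is incomplete at its central step.

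A second, smaller gap is the exclusion of $k=1$. You assert that the $k=1$ mode corresponds to ambient isometries and can be discarded, but since the multiplier vanishes identically in $c$ for $k=1$, one must actually prove that a deformation supported on the first harmonics is trivial --- otherwise a non-trivial deformation could a priori consist only of first harmonics and yield no condition on $c$ at all. The paper devotes a separate proposition to this, showing via Liouville's formula that the linearized curvature perturbation has the form $g+g''$ up to constants, hence is $L^2$-orthogonal to the first harmonics. Your plan needs the analogous statement that first-harmonic normal displacements of a geodesic circle in $\mathbb{S}^2$ or $\mathbb{H}^2$ are exactly the restrictions of infinitesimal isometries; this is true and standard, but it must be stated and used, not merely alluded to.
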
 

Thus, as in the Euclidean case, only a countable number of values of the contact angle $\alpha$ are possible for a given radius $R$. 

Note that, in the Euclidean plane, Gutkin curves with contact angle $\alpha= \frac{\pi}{2}$ are precisely the curves of constant width; the same holds in the spherical and hyperbolic settings;  see \cite{Le} for curves of constant width in non-Euclidean geometries.

In section \ref{sect:polygon}, we consider the following analog of Gutkin's theorem for
polygons in $\mathbb{E}^2$. Let P be a convex $n$-gon with vertices $\left\{v_{0},...,v_{n-1}\right\}$ in their 
cyclic order. For $k\in \mathbb{N}, 2\leq k\leq n/2$, a $k$-diagonal is a straight line segment connecting vertices of $P$ whose indices differ by $k$, modulo $n$.  Then $P$ is a \textit{non-trivial Gutkin (n,k)-gon} if $P$ is not regular and there exists $\alpha$ such that for any $k$-diagonal $D$, both contact angles between $D$ and $P$ equal $\alpha$ (see Figure \ref{twoex} for examples). That is, for each $i$,  
\[ \angle v_{i+1}v_{i}v_{i+k} =\angle v_{i+k-1}v_{i+k}v_{i} =\alpha, \]
where $\angle v_{i+1}v_{i}v_{i+k}$ denotes the angle between the edge $|v_{i+1}v_{i}|$ and the $k$-diagonal $|v_{i}v_{i+k}|$. 

\begin{figure}[H]
\centering
\includegraphics[height=2in]{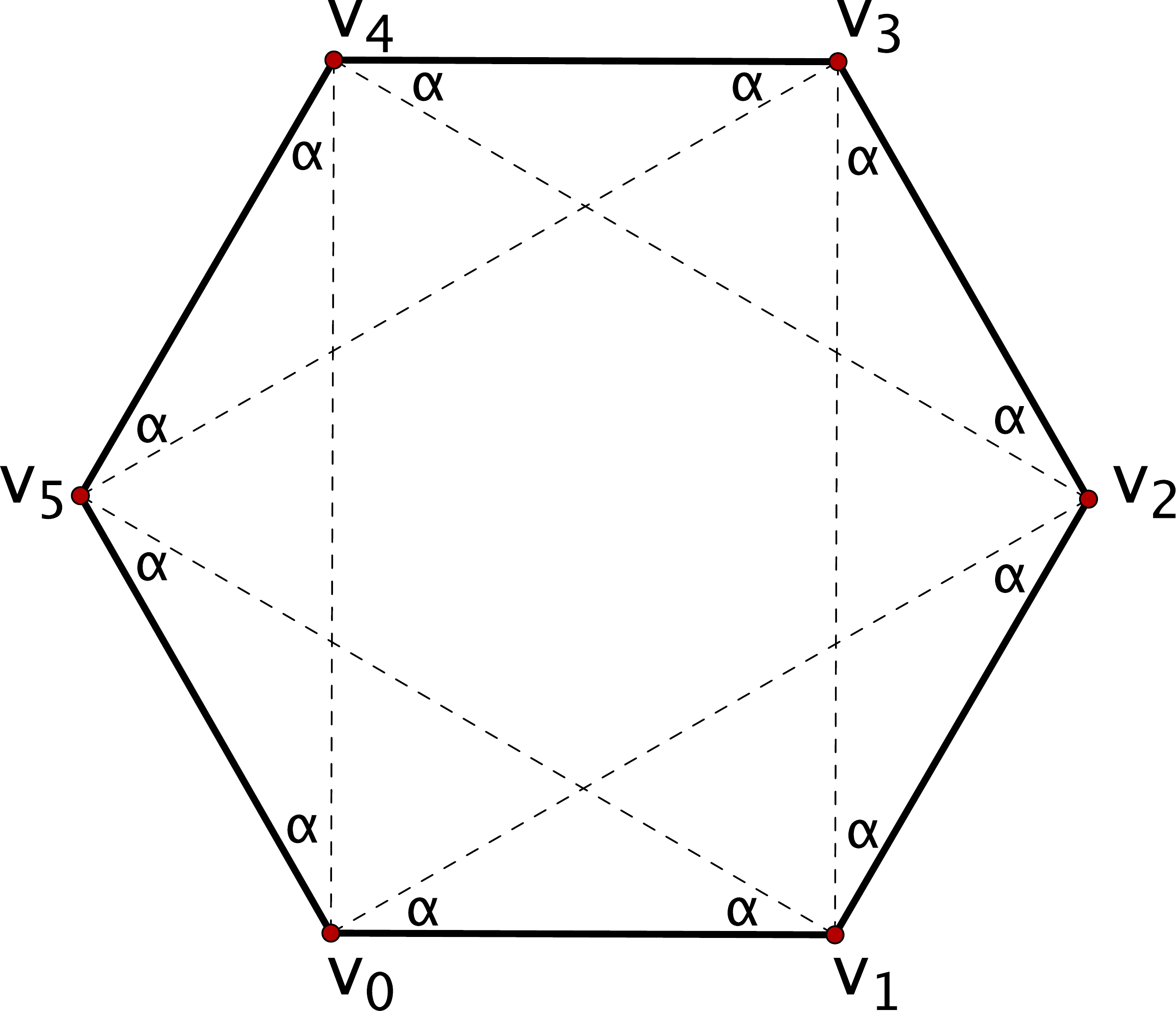}
\quad
\includegraphics[height=2in]{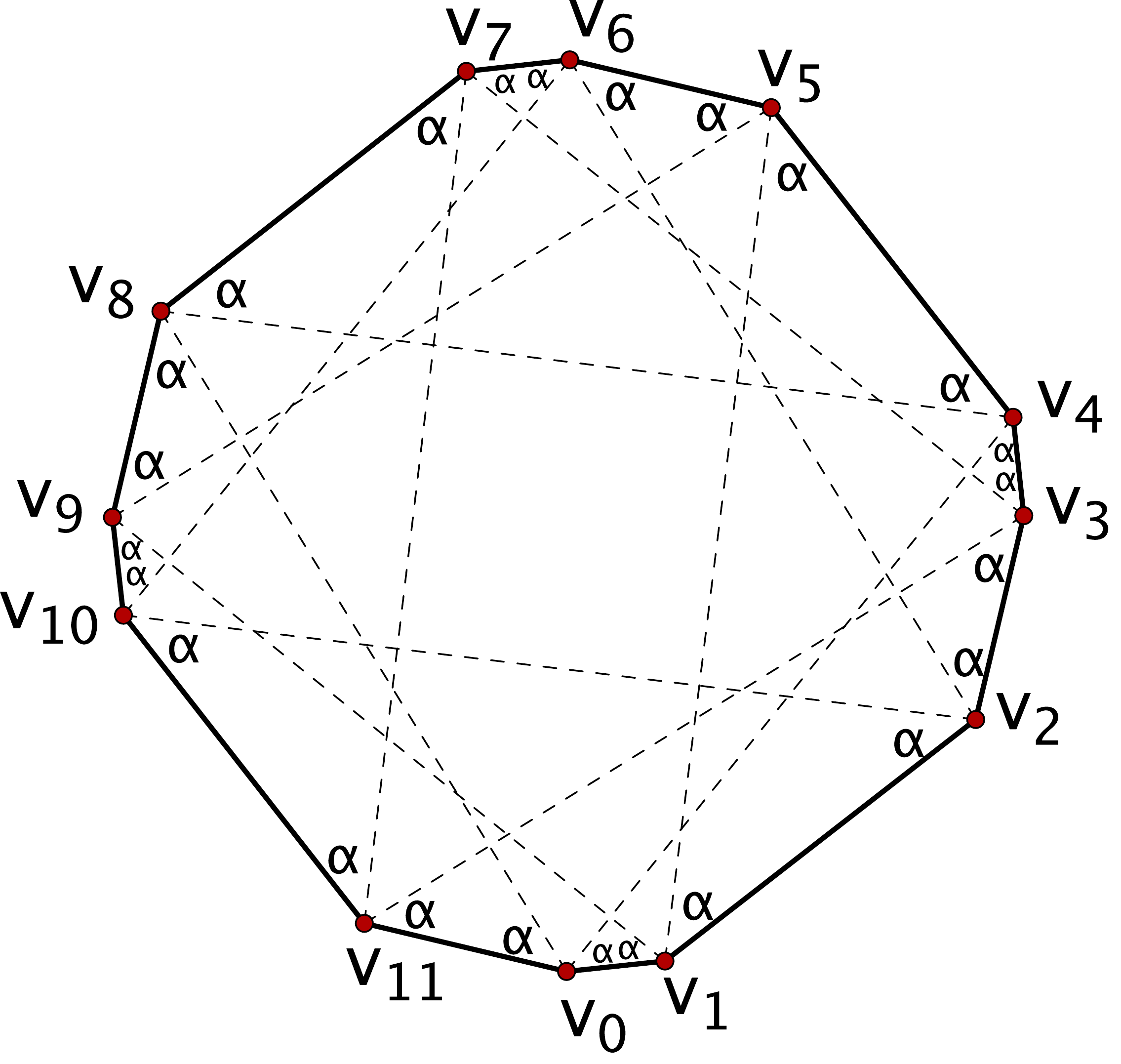}
\caption{Gutkin $(6,2)$-gon and $(12,4)$-gon}
\label{twoex}
\end{figure}

Our second result is a complete characterization of the pairs $(n,k)$ for which a non-trivial Gutkin $(n,k)$-gon exists:

\begin{theorem} \label{mainpol}
A non-trivial Gutkin $(n,k)$-gon in the Euclidean plane exists if and only if $n$ and $k-1$ are not coprime. 
\end{theorem}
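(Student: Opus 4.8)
The plan is to encode a convex $n$-gon by its sequence of exterior angles $\delta_0,\dots,\delta_{n-1}>0$ (so $\sum_i\delta_i=2\pi$) together with its edge lengths $\ell_0,\dots,\ell_{n-1}>0$; the edge directions are then $\theta_i=\theta_0+\sum_{m\le i}\delta_m$ and the edge vectors are $\ell_i e^{\mathbf i\theta_i}$. First I would translate the Gutkin condition into this data. Writing the direction of the $k$-diagonal $|v_iv_{i+k}|$ as $\phi_i$, the two contact-angle equations at its endpoints read $\phi_i=\theta_i+\alpha$ and $\phi_i=\theta_{i+k-1}-\alpha$. Equating them gives the purely angular constraint
\[
\delta_{i+1}+\dots+\delta_{i+k-1}=2\alpha\qquad(\text{all }i),
\]
while imposing that $|v_iv_{i+k}|=\sum_{j=0}^{k-1}\ell_{i+j}e^{\mathbf i\theta_{i+j}}$ actually point in direction $\theta_i+\alpha$ becomes the relation, linear in the lengths,
\[
\sum_{j=0}^{k-1}\ell_{i+j}\,\sin\!\big(\theta_{i+j}-\theta_i-\alpha\big)=0\qquad(\text{all }i).
\]

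The first thing this buys is a normal form. Subtracting consecutive angular constraints yields $\delta_{i+k-1}=\delta_i$ for all $i$, so $(\delta_i)$ is periodic with period $d:=\gcd(n,k-1)$; summing over one period then forces $\alpha=\pi(k-1)/n$. Thus a Gutkin $(n,k)$-gon is governed, beyond its length degrees of freedom, by a period-$d$ angle sequence, and it is $N$-fold symmetric with $N=n/d$ precisely when the lengths are period-$d$ as well. The regular polygon is the solution with all $\delta_i$ and all $\ell_i$ equal, and \emph{non-trivial} means any other.

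For the coprime case $d=1$ (the ``only if'' direction), the periodicity forces all $\delta_i=2\pi/n$, so the polygon is equiangular and the length relations become a single circulant convolution $\sum_j a_j\ell_{i+j}=0$ with $a_j=\sin(2\pi j/n-\alpha)$. I would diagonalize this by the discrete Fourier transform with $\omega=e^{2\pi\mathbf i/n}$: the admissible Fourier modes $m$ of $(\ell_i)$ are exactly those annihilated by the symbol $\hat a(m)=\sum_{j}a_j\omega^{mj}$. Closure forces the mode $m=1$ (and its conjugate $m=n-1$) to vanish, the mode $m=0$ is the overall scale, and a non-trivial equiangular solution would require some intermediate $m\in\{2,\dots,n-2\}$ with $\hat a(m)=0$. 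A direct product-to-sum computation reduces $\hat a(m)=0$ to
\[
\frac{\sin\!\big(\pi m(k+1)/n\big)}{\sin\!\big(\pi m(k-1)/n\big)}=\frac{\sin\!\big(\pi(k+1)/n\big)}{\sin\!\big(\pi(k-1)/n\big)},
\]
i.e.\ to the coincidence of two Chebyshev values $U_{m-1}(\cos\tfrac{\pi(k-1)}{n})=U_{m-1}(\cos\tfrac{\pi(k+1)}{n})$, and I would extract from this the number-theoretic fact that, when $\gcd(n,k-1)=1$, there is no such $m$ in $\{2,\dots,n-2\}$. This rules out non-trivial equiangular solutions and, since $d=1$ already forces equiangularity, completes the coprime direction.

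For the reverse direction I would construct a non-regular example when $d>1$ using the $N$-fold symmetric ansatz, working with the fundamental block of $d$ edges. Here closure is automatic (the $N$ rotated copies of the block sum to zero, since $\sum_{p<N}e^{2\pi\mathbf i p/N}=0$ and $N=n/d\ge2$), the angular constraint is automatic (each window of $k-1\equiv0\pmod d$ consecutive angles covers the same number of periods), and one is left with $d$ length relations in the $d$ block lengths, carrying the $d-1$ block angles as extra parameters. Since the regular polygon solves this reduced system, I would linearize there and apply the implicit function theorem, aiming to show that tilting the block angles away from equality propagates through the length relations to a genuinely non-regular convex polygon, producing a $(d-1)$-parameter family of Gutkin $(n,k)$-gons emanating from the regular one. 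I expect the main obstacle to be exactly this rank computation: the examples $(8,3)$ and $(12,4)$ show the mechanism is subtle, for $(8,3)$ the relations degenerate to $\ell_{i+2}=\ell_i$ and non-regular equiangular solutions exist outright, whereas for $(12,4)$ the equiangular system is rigid and the deformation must genuinely move the angles. Controlling the rank of the reduced circulant-block linearization uniformly over all $(n,k)$ with $\gcd(n,k-1)>1$, and verifying that the solutions produced remain in the open convexity cone, is the crux of the argument.
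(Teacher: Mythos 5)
Your reduction of the ``only if'' direction follows the same road as the paper: the contact-angle conditions force $\theta_{i+k-1}-\theta_i=2\alpha$, hence $\delta_{i+k}=\delta_{i+1}$, hence period $\gcd(n,k-1)$ for the exterior angles (this is the paper's Proposition~\ref{prop:angle} and Corollary~\ref{cor:angle}), and in the coprime case the polygon is equiangular and the closure conditions become a circulant system whose symbol must be checked for zeros at the intermediate Fourier modes. Your symbol $\hat a(m)=\sum_j\sin(2\pi(j-m)/n)\,\omega^{mj}$ agrees with the paper's eigenvalues $\lambda_r$ up to a factor of $2i$, and your Chebyshev reformulation is equivalent to the tangent equation (\ref{restr2}). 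But the step you describe as ``extract from this the number-theoretic fact'' is precisely the hard content of this direction: one must show that the trigonometric Diophantine equation $U_{m-1}(\cos\tfrac{\pi(k-1)}{n})=U_{m-1}(\cos\tfrac{\pi(k+1)}{n})$ has no solutions $2\le m\le n-2$ when $\gcd(n,k-1)=1$. The paper does not prove this either; it invokes the classification of Connelly and Csik\'os \cite{connelly}, which shows any solution satisfies $k+r=n/2$ and $n\mid(k-1)(r-1)$, whence none exist in the coprime case. As written, your proposal asserts this fact rather than establishing it, and it is not a routine verification --- it is the subject of an entire paper. You need to either cite that result or supply an argument.

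The larger gap is in the ``if'' direction, where you take a genuinely different and much harder route. You set up a reduced system on a fundamental block of $d=\gcd(n,k-1)$ edges and propose to deform away from the regular polygon via the implicit function theorem, and you yourself flag the rank computation of the linearized block system as ``the crux'' --- which you do not carry out, and which (as your own $(8,3)$ versus $(12,4)$ discussion shows) behaves differently in different cases. The paper sidesteps all of this with an explicit construction: set $p=\gcd(n,k-1)$, $q=n/p$, divide a circle into $q$ equal arcs and subdivide each into $p$ arcs of arbitrary lengths $\theta_1,\dots,\theta_p$ summing to $2\pi/q$, repeated in the same order. The inscribed angle theorem immediately gives $\angle v_{i+1}v_iv_{i+k}=\angle v_{i+k-1}v_{i+k}v_i=\pi/q$ for every $i$, since each $k$-diagonal subtends $k-1$ consecutive arcs whose total length is $(k-1)/p$ full periods, i.e.\ always $2\pi/q\cdot\frac{k-1}{p}\cdot p\cdot\frac1{p}$ --- at any rate a quantity independent of $i$. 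This yields a $(p-1)$-parameter family of pairwise non-similar convex Gutkin $(n,k)$-gons with no transversality or convexity issues to check. I would strongly recommend replacing your deformation argument with an explicit inscribed-polygon construction of this kind; as it stands, the existence half of your proof is incomplete at exactly the point where completeness is needed.
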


Interestingly, the main ingredient of our proof is the Diophantine equation
$$
\tan \left(\frac{kr \pi}{n}\right) \tan \left(\frac{\pi}{n}\right) = \tan \left(\frac{k \pi}{n}\right) \tan \left(\frac{r \pi}{n}\right)
$$
which is a discrete version of (\ref{restr1}). This equation also appeared in \cite{Tabachnikov2}, and it was solved in \cite{connelly}.

\section{A proof of Gutkin's theorem in $E^2$}\label{section2}
Although the existing proofs of Gutkin's theorem in $\mathbb{E}^2$ \cite{Gu1,Gutkin,Tabachnikov1} are very clear and simple, our goal in this paper is to study the situation in $\mathbb{S}^2$ and $\mathbb{H}^2$. Therefore, in this section we reprove (the necessary part of) Gutkin's theorem using methods which can be applied to the other constant curvature settings. This proof is motivated by the study of integrable  billiards by M. Bialy  \cite{Bialy1,Bialy2}.

\begin{figure}[H]
\centering
\includegraphics[height=2.2in]{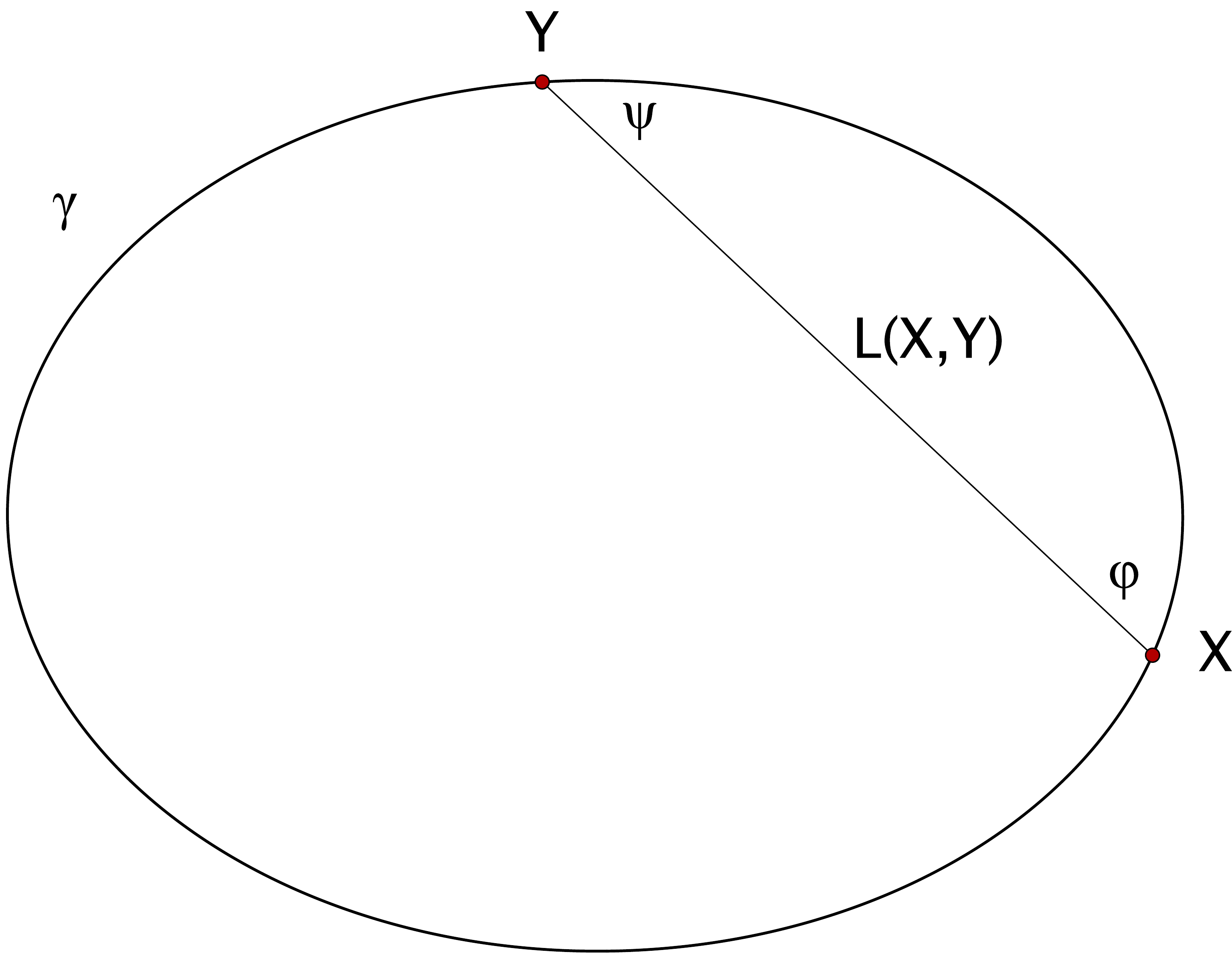}
\caption{\label{fig:continuous} $\gamma$ with chord $xy$}
\end{figure}

Let $\gamma$ be a smooth, convex and closed curve parameterized by arc length as shown in Figure \ref{fig:continuous}, $x$ and $y$ be points on $\gamma$ ($x$ and $y$ are arc length parameters), $\phi$ and $\psi$ the angles made by the chord  $xy$ with $\gamma$, and $L$ the length of the chord, the generating function of the billiard ball map.
 We have that 
 $$
 L_x = - \cos {\phi},\quad L_y = \cos{\psi},
 $$
\begin{equation}\label{eq:E2Lderiv}
L_{xy}=\frac{\sin{\phi}\sin{\psi}}{L}, \quad L_{xx} = \frac{\sin^2 \phi}{L} -
\kappa(x) \sin \phi, \quad L_{yy} = \frac{\sin^2 \psi}{L} - \kappa(y) \sin \psi,
\end{equation}
where $\kappa$ is the curvature of the curve, and subscripts denote partial differentiation, see, e.g., \cite{Bialy1}.

We interpret $L(x,y)$ as a function on the torus $\gamma \times \gamma$; then there exists a curve $s$ on this torus where both angles, $\phi$ and $\psi$, have the same constant value  $\alpha$.

We seek a parameter $t$ on $\gamma$ so that the values of this parameter at points $x$ and $y$ differ by a constant: $t(y)=t(x)+2c$. 
Denote $d/dt$ by prime.

\begin{proposition} \label{turn}
The parameter $t$ is determined by the condition $x' = a/\kappa(x)$, where
$a$ is a constant.
\end{proposition}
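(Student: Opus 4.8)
The plan is to work on the torus $\gamma\times\gamma$ and exploit that along $s$ both angles are constant, so that $d\phi=0$ and $d\psi=0$ there. Differentiating the first-order identities $L_x=-\cos\phi$ and $L_y=\cos\psi$ and inserting the second derivatives from \eqref{eq:E2Lderiv} will pin down the tangent direction of $s$ as a relation between $dx$ and $dy$. I then want to translate the requirement $t(y)=t(x)+2c$ into a differential condition and match it against this tangent direction; the expected conclusion is that the correct parameter is proportional to the turning angle, which is precisely the statement $x'=a/\kappa(x)$.

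First I would differentiate. From $L_x=-\cos\phi$ we obtain $\sin\phi\,d\phi=L_{xx}\,dx+L_{xy}\,dy$, and from $L_y=\cos\psi$ we obtain $-\sin\psi\,d\psi=L_{xy}\,dx+L_{yy}\,dy$. Imposing $d\phi=d\psi=0$ with $\phi=\psi=\alpha$ and substituting \eqref{eq:E2Lderiv} gives, along $s$,
$$
\frac{dy}{dx}=-\frac{L_{xx}}{L_{xy}}=\frac{\kappa(x)L}{\sin\alpha}-1,\qquad \frac{dx}{dy}=-\frac{L_{yy}}{L_{xy}}=\frac{\kappa(y)L}{\sin\alpha}-1.
$$
Since these two equations must determine the same tangent line to $s$, the Hessian of $L$ is degenerate, $L_{xx}L_{yy}=L_{xy}^2$, which simplifies to
$$
\frac{\kappa(x)\kappa(y)L}{\sin\alpha}=\kappa(x)+\kappa(y).
$$
I expect this single identity to carry the whole argument.

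Next I would pass to the turning angle $\theta$, characterized along $\gamma$ by $d\theta=\kappa\,dx$ in arc length. Writing $\theta_x,\theta_y$ for the tangent angles at the two endpoints, I compute
$$
\frac{d\theta_y}{d\theta_x}=\frac{\kappa(y)\,dy}{\kappa(x)\,dx}=\frac{\kappa(y)}{\kappa(x)}\left(\frac{\kappa(x)L}{\sin\alpha}-1\right),
$$
and the degeneracy identity collapses the right-hand side to $1$. Hence $d\theta_y=d\theta_x$ along $s$, so $\theta_y-\theta_x$ is constant. Consequently the parameter proportional to the turning angle, $t=\theta/a$, satisfies $t(y)-t(x)=(\theta_y-\theta_x)/a$, a constant that we name $2c$; and since $dt=\kappa(x)\,dx/a$ this parameter obeys $x'=dx/dt=a/\kappa(x)$. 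For the converse, any $t=f(x)$ with $f(y)-f(x)$ constant satisfies $f'(y)\,dy=f'(x)\,dx$ along $s$, which combined with $\kappa(y)\,dy=\kappa(x)\,dx$ forces $f'/\kappa$ to agree at the two endpoints of every chord; as the chord sweeps $\gamma$ this makes $f'/\kappa$ globally constant, so $f=a\theta+\text{const}$ and again $x'=a/\kappa(x)$.

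The main obstacle is the degeneracy step: one must recognize that the existence of a single curve $s$ carrying both constant angles forces $L_{xx}L_{yy}=L_{xy}^2$, and then observe that this precise algebraic relation is exactly what reduces $d\theta_y/d\theta_x$ to $1$. Everything else is routine manipulation with $d\theta=\kappa\,dx$ and a reparameterization.
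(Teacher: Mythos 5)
Your argument is correct and is essentially the paper's proof: imposing $d\phi=d\psi=0$ along $s$ yields the same linear system as differentiating $L_x=-\cos\phi$, $L_y=\cos\psi$ in $t$, the degeneracy $L_{xx}L_{yy}=L_{xy}^2$ gives the identity $\kappa(x)\kappa(y)L=\sin\alpha\,(\kappa(x)+\kappa(y))$, and this collapses the ratio to $\kappa(y)\,dy=\kappa(x)\,dx$, exactly as in the paper. Your added converse (uniqueness of the parameter) goes slightly beyond what the paper records and is a bit quick---invariance of $f'/\kappa$ under the endpoint map $x\mapsto y$ does not by itself force global constancy---but this does not affect the statement being proved.
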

\begin{proof}
Since $\alpha$ is constant as a function of $t$, 
\begin{equation}\label{eq:Lderiv_simple}
0=L_{xt} = L_{xx} x' + L_{xy} y',\quad 
0=L_{yt} = L_{xy} x'+ L_{yy} y'.
\end{equation}
This implies that $L_{xx} L_{yy} = L_{xy}^2$ along our curve, and substituting
from equation (\ref{eq:E2Lderiv}), we have 
\begin{equation}\label{nice}
\frac{\sin\alpha}{\kappa(x)}+\frac{\sin\alpha}{\kappa(y)} = L.
\end{equation}
We  compute $y'/x'$ from equations (\ref{eq:E2Lderiv})--(\ref{nice}):
$$\frac{y'}{x'}=-\frac{L_{xy}}{L_{yy}}= \frac{\sin\alpha}{\kappa(y) L-\sin\alpha}=
\frac{\kappa(x)}{\kappa(y)},$$
which implies the claim.
\end{proof}

Since the curvature is the rate of turning of the direction of the curve, Proposition \ref{turn} defines (up to a multiplicative coefficient) the angular parameter along the curve. Note that $0\leq x \leq L(\gamma)$, and $0 \leq t \leq T$, where $T$ is the upper bound of $t$ and $L(\gamma)$ is the length of $\gamma$. It follows that 
$$T=\int_0^T \mathrm{d}t=\frac{1}{a} \int_0^{L(\gamma)} \kappa(x) \mathrm{d}x.$$
Choose $a=1$ to make $T=2\pi$, which agrees with the angle. Then $c=\alpha$.

In view of Proposition \ref{turn}, we set 
$$ f_1:=f(t-\alpha)=\frac{\sin \alpha}{\kappa(x)},\quad  f_2:=f(t+\alpha)=\frac{\sin \alpha}{\kappa(y)}.$$
 From equation (\ref{nice})  we have:
 $$ L=\frac{\kappa(x)\sin \alpha+\kappa(y)\sin \alpha}{\kappa(x)\kappa(y)}=f_1+f_2.$$
 It follows that $L'={f_1'}+{f_2'}$. By the chain rule, we have that
$$L_x {x'}+L_y {y'}={\cot{\alpha}}\ (f_1-f_2)={f'_1}+{f'_2},$$
and therefore
\begin{equation}\label{eq:E2f}
f'(t+\alpha)+f'(t-\alpha)= \cot{\alpha} \left( f(t+\alpha)-f(t-\alpha)\right).
\end{equation}

Since $f(t)$ is a function with period $2\pi$,  using the Fourier expansion, we obtain $f(t)=\Sigma b_k e^{ikt}$, where $b_k \in \mathbb{C}$, and $b_{-k}=\overline{b_k}$. Thus
$$f(t\pm \alpha)=\Sigma b_k e^{\pm ik\alpha} e^{ikt}, \quad f'(t\pm \alpha)=\Sigma b_k ik e^{\pm ik\alpha} e^{ikt}.$$ 
Let $LHS$ be the left hand side of equation (\ref{eq:E2f}) and $RHS$ be the right hand side. It follows that 
$$LHS=\Sigma b_k ik \big(e^{ik\alpha}+e^{-ik\alpha}\big) e^{ikt},\ RHS= \cot{\alpha} \Sigma b_k
\big( e^{ik\alpha}-e^{-ik\alpha}\big) e^{ikt}.$$ 
Equating both sides, we have that 
$$b_k (k \cos{k\alpha}-\cot{\alpha} \sin{k\alpha})=0.$$
For $k=1$, this automatically holds, and if $b_k\neq 0$ for some $k\geq 2$ then
\begin{equation*}
k \tan{\alpha}= \tan{k\alpha}.
\end{equation*}

If the curve is a circle then $f(t)$ is constant and all $b_k=0$, and if the curve is not a circle then $b_k\neq 0$ for some $k\geq 1$. It remains to show that $b_1=0$.

Recall that $x$ is arc length and $t$ is the angular parameters on the curve $\gamma$. Then $ \gamma_x=(\cos t, \sin t),\ dt/dx=\kappa.$ Therefore 
$$ \gamma_t=\frac{1}{\kappa} (\cos t, \sin t)\ \ {\rm and}\ \
\int_0^{2\pi} \gamma_t\ dt = 0.$$ 
Hence 
$$ \int_0^{2\pi} \frac{\cos t }{\kappa}\ dt = \int_0^{2\pi} \frac{\sin t }{\kappa}\ dt = 0,$$
that is, the  function $f$ is $L^2$-orthogonal to the first harmonics. Hence 
 $f$ has no first harmonics in the Fourier expansion, that is, $b_1=0$.

\section{Infinitesimal Analogs of Gutkin's theorem in $\mathbb{S}^2$ and $\mathbb{H}^2$}\label{section3}

We prove Theorem \ref{Conscurv} in detail for $\mathbb{S}^{2}$. The hyperbolic case being analogous, we only indicate the necessary changes. 

Let $\gamma$ be a Gutkin curve and, as before, let $x$ and $y$ be arc length parameters. Then  $\phi$ and $\psi$ should have constant value, namely, the contact angle $\alpha$. By \cite{Bialy2}, we have the following formulae for the first and second partials of $L$:
$$ L_x = - \cos {\alpha}, L_y = \cos{\alpha} $$
\begin{equation}\label{eq:S2Lderiv}
L_{xy}=\frac{\sin^2{\alpha}}{\sin L} \quad L_{xx} = \frac{\sin^2 \alpha}{\tan L}
- \kappa(x) \sin \alpha \quad L_{yy} = \frac{\sin^2 \alpha}{\tan L} - \kappa(y) \sin \alpha.
\end{equation}
Once again, we seek a parameterization on the curve such that  the values of the parameter at points $x$ and $y$ differ by a constant: $t(y)=t(x)+2c$.
\begin{proposition} \label{newparam}
The desired parameterization $\gamma(t)$ is given by the equation 
\begin{equation*}
x' =\frac{a}{\sqrt{\kappa^2(x)+\sin^2{\alpha}}}
\end{equation*}
 where $a$ is a constant.
\end{proposition}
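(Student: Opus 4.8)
The plan is to mirror the Euclidean argument of Proposition~\ref{turn}, adapting it to the spherical second-derivative formulas~\eqref{eq:S2Lderiv}. As in the Euclidean case, the requirement that $\alpha$ be constant along the curve $s$ yields the two equations $L_{xt}=L_{xx}x'+L_{xy}y'=0$ and $L_{yt}=L_{xy}x'+L_{yy}y'=0$; the compatibility of this homogeneous linear system (with nontrivial solution $(x',y')$) forces the determinant condition $L_{xx}L_{yy}=L_{xy}^{2}$ along $s$. First I would substitute the spherical partials from \eqref{eq:S2Lderiv} into this determinant identity and simplify.

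Next I would extract the ratio $y'/x'$ from the linear system exactly as in the Euclidean proof, using
\[
\frac{y'}{x'}=-\frac{L_{xy}}{L_{yy}}=-\frac{L_{xx}}{L_{xy}}.
\]
Substituting the spherical expressions, the first of these gives
\[
\frac{y'}{x'}=-\frac{\sin^2\alpha/\sin L}{\sin^2\alpha/\tan L-\kappa(y)\sin\alpha}.
\]
I expect that after clearing denominators and using the determinant relation from the previous step to eliminate $L$, this ratio will collapse into a clean expression purely in terms of the curvatures at $x$ and $y$. The target is to show that $y'/x'$ equals the ratio of a single function of $\kappa$ evaluated at the two points, namely
\[
\frac{y'}{x'}=\frac{\sqrt{\kappa^2(x)+\sin^2\alpha}}{\sqrt{\kappa^2(y)+\sin^2\alpha}}.
\]
Once this symmetric form is established, it follows immediately that the quantity $x'\sqrt{\kappa^2(x)+\sin^2\alpha}$ takes the same value at $x$ and at $y$; since the parameter difference $t(y)-t(x)=2c$ is constant and this holds along the whole curve, that quantity must be a global constant $a$, which is exactly the claimed equation.

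The main obstacle I anticipate is the algebraic simplification in the middle step. In the Euclidean case the analog of \eqref{nice} was the linear relation $\sin\alpha/\kappa(x)+\sin\alpha/\kappa(y)=L$, which made the cancellation transparent. In the spherical setting the determinant condition $L_{xx}L_{yy}=L_{xy}^{2}$ will involve $\sin L$ and $\tan L$ together, producing a more complicated constraint relating $L$ to $\kappa(x)$ and $\kappa(y)$; the key technical point is to solve this for the combination of trigonometric functions of $L$ needed to reduce $y'/x'$, and to verify that the appearance of $\sqrt{\kappa^2+\sin^2\alpha}$ (rather than simply $\kappa$) arises naturally from completing a square inside the radical. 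I would handle this by introducing the shorthand $K_x=\kappa(x)$, $K_y=\kappa(y)$ and carrying the computation symbolically until the symmetric square-root factor emerges, then reading off the parameterization.
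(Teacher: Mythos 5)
Your proposal follows the paper's proof essentially step for step: the determinant condition $L_{xx}L_{yy}=L_{xy}^2$ from the vanishing of $L_{xt}$ and $L_{yt}$, the extraction of $y'/x'$ from the linear system, and the reduction of that ratio to $\sqrt{\kappa^2(x)+\sin^2\alpha}\,/\,\sqrt{\kappa^2(y)+\sin^2\alpha}$. The one step you defer — turning the constraint \eqref{roots} into the symmetric form \eqref{root2} — is exactly the crux, and the paper resolves it by expanding \eqref{roots} into $\kappa(x)\kappa(y)-\frac{\sin\alpha}{\tan L}(\kappa(x)+\kappa(y))-\sin^2\alpha=0$ and multiplying through by $\kappa(y)-\kappa(x)$, which regroups directly into $\bigl(\kappa(x)-\frac{\sin\alpha}{\tan L}\bigr)(\kappa^2(y)+\sin^2\alpha)=\bigl(\kappa(y)-\frac{\sin\alpha}{\tan L}\bigr)(\kappa^2(x)+\sin^2\alpha)$; this is the identity your symbolic computation would need to produce, and it does hold.
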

\begin{proof}
 Equation (\ref{eq:Lderiv_simple}) holds along our curve as before, so $L_{xx} L_{yy} = L_{xy}^2$. Substitute from \ref{eq:S2Lderiv} to obtain the equation
\begin{equation} \label{roots}
\left(\kappa(x) -\frac{\sin\alpha}{\tan L}\right)\left(\kappa(y) -\frac{\sin\alpha}{\tan L}\right)=\frac{\sin^2 \alpha}{\sin^2 L}.
\end{equation} 
Then we can compute  $y'/x'$ from equation (\ref{eq:Lderiv_simple}):
\begin{equation} \label{ratio}
\frac{y'}{x'}=-\frac{-L_{xx}}{L_{xy}}=\left(\kappa(x)-\frac{\sin{\alpha}}{\tan L}\right)\frac{\sin L}{\sin{\alpha}}
=\frac{\sqrt{\kappa(x) - \frac{\sin\alpha}{\tan L}}}{\sqrt{\kappa(y) - \frac{\sin\alpha}{\tan L}}},
\end{equation}
the last equality due to (\ref{roots}).
Next, we claim that
\begin{equation} \label{root2}
\frac{\sqrt{\kappa(x) - \frac{\sin\alpha}{\tan L}}}{\sqrt{\kappa(y) - \frac{\sin\alpha}{\tan L}}} =
\frac{\sqrt{\kappa^2(x)+\sin^2{\alpha}}}{\sqrt{\kappa^2(y)+\sin^2{\alpha}}},
\end{equation}
which, along with (\ref{ratio}), implies the statement of the proposition.

It remains to prove (\ref{root2}). Rewrite (\ref{roots}) as 
$$
\kappa(x)\kappa(y)-\frac{\sin\alpha}{\tan L} (\kappa(x)+\kappa(y)) -\sin^2\alpha=0,
$$
and multiply by $\kappa(y)-\kappa(x)$ to obtain
$$
\kappa(x)\kappa^2(y)-\frac{\sin\alpha}{\tan L}\kappa^2(y)+\kappa(x)\sin^2\alpha 
= \kappa^2(x)\kappa(y)-\frac{\sin\alpha}{\tan L}\kappa^2(x)+\kappa(y)\sin^2\alpha,
$$
or
$$
\left(\kappa(x) - \frac{\sin\alpha}{\tan L}\right) (\kappa^2(y)+\sin^2{\alpha}) =
\left(\kappa(y) - \frac{\sin\alpha}{\tan L}\right) (\kappa^2(x)+\sin^2{\alpha}). 
$$
This implies (\ref{root2}).
\end{proof}

We  choose $a$ in such a way that 
\begin{equation} \label{choice}
T=\frac{1}{a}\int_0^{L(\gamma)}
\sqrt{\kappa^2(x)+\sin^2{\alpha}}~ \mathrm{d}x=2\pi,
\end{equation}
 in order to make Fourier expansion more convenient. 
 
Define a function $f$ on the curve by  
\begin{equation} \label{newf}
\cot f = \frac{\kappa}{\sin {\alpha}}.
\end{equation}

\begin{remark}
The meaning of the function $f$ is illustrated in Figure \ref{circle}. Let $O$ be the center of the osculating circle at point $x\in\gamma$, and let $R$ be its radius. Then $\cot R = \kappa(x)$. Drop the perpendicular from $O$ to the segment $xy$. Then we have a right triangle $PxO$ with an angle $\pi/2 -\alpha$. Solving a right spherical trianle yields $\cot |Px| \sin \alpha = \cot R$. Hence $f=|Px|$.
\end{remark}

\begin{figure}[H]
\centering
\includegraphics[height=2.5in]{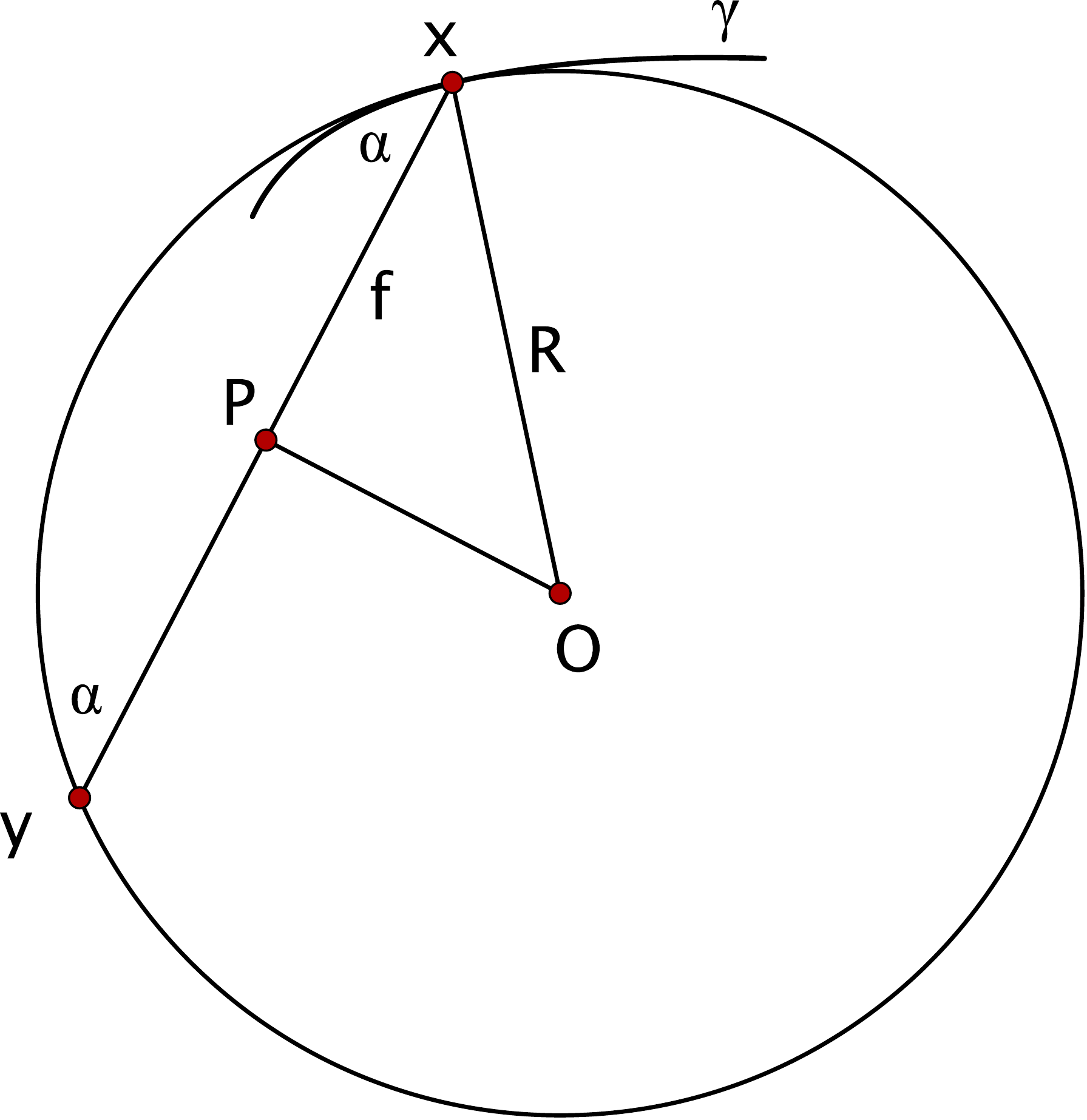}
\caption{\label{circle} Geometric interpretation of the function $f$}
\end{figure}

Denote by $f_1$ and $f_2$ the values of this function at points $x$ and $y$.

\begin{proposition} \label{mainprop}
One has
\begin{equation} \label{eq:S2f}
{a\cot{\alpha}}\ (\sin f_1-\sin f_2)={f'_1}+{f'_2}.
\end{equation} 
\end{proposition}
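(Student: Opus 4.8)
The plan is to mirror the Euclidean argument of Section \ref{section2}, where the relation $L = f_1 + f_2$ was the engine that converted the chain-rule computation into equation (\ref{eq:E2f}). Accordingly, the first and main step is to establish the spherical analog of that relation, namely $L = f_1 + f_2$ with $f$ as in (\ref{newf}). Once this is in hand, differentiating in $t$ and expanding $L'$ by the chain rule produces (\ref{eq:S2f}) after inserting the parameterization from Proposition \ref{newparam}.

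To obtain $L = f_1 + f_2$, I would start from equation (\ref{roots}) and rewrite everything through $f$. Since (\ref{newf}) gives $\kappa = \sin\alpha \cot f$, the factor $\sin^2\alpha$ cancels and (\ref{roots}) becomes
$$(\cot f_1 - \cot L)(\cot f_2 - \cot L) = \frac{1}{\sin^2 L}.$$
Applying the identity $\cot u - \cot v = \sin(v-u)/(\sin u \sin v)$ to each factor and clearing $\sin^2 L$ reduces this to
$$\sin(L - f_1)\sin(L - f_2) = \sin f_1 \sin f_2.$$
Expanding both sides by the product-to-sum formula and cancelling the common term $\cos(f_1 - f_2)$ leaves $\cos(2L - f_1 - f_2) = \cos(f_1 + f_2)$, whose solution families are $L = f_1 + f_2$ and $L \equiv 0 \pmod{\pi}$. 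The degenerate branch is excluded because $L$ is a genuine nonzero chord length, so by continuity from the circular solution — where symmetry forces $f_1 = f_2 = L/2$ — I conclude that $L = f_1 + f_2$ holds along the entire curve.

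With this relation secured, I would differentiate $L = f_1 + f_2$ in $t$ to obtain $L' = f_1' + f_2'$, and separately expand $L'$ by the chain rule using $L_x = -\cos\alpha$, $L_y = \cos\alpha$ from (\ref{eq:S2Lderiv}):
$$L' = L_x\, x' + L_y\, y' = \cos\alpha\,(y' - x').$$
From (\ref{newf}) one computes $\sqrt{\kappa^2 + \sin^2\alpha} = \sin\alpha/\sin f$, so Proposition \ref{newparam} yields $x' = a\sin f_1/\sin\alpha$ and, by the same rule applied at $y$, $y' = a\sin f_2/\sin\alpha$. Substituting gives $L' = a\cot\alpha\,(\sin f_2 - \sin f_1)$, and equating the two expressions for $L'$ produces (\ref{eq:S2f}) up to the overall sign fixed by the orientation convention on the chord (the same sign feature already present in the Euclidean identity (\ref{eq:E2f})).

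I expect the only genuine obstacle to be the trigonometric reduction of the middle step: one must carry out the product-to-sum manipulation cleanly and, above all, justify discarding the spurious root $L \equiv 0 \pmod{\pi}$ of $\cos(2L - f_1 - f_2) = \cos(f_1 + f_2)$, which is precisely where the convexity of $\gamma$ and continuity from the circular solution are used. The remaining steps are formal and parallel the Euclidean case, and the hyperbolic version should follow from the identical computation after replacing the circular functions of $L$ by their hyperbolic counterparts.
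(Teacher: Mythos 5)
Your proposal follows essentially the same route as the paper: both derive $L = f_1 + f_2$ from the degeneracy condition $L_{xx}L_{yy} = L_{xy}^2$ after substituting $\kappa = \sin\alpha\,\cot f$, and then obtain (\ref{eq:S2f}) by comparing $L' = f_1' + f_2'$ with the chain-rule expansion $L' = L_x x' + L_y y'$ using $x' = a\sin f/\sin\alpha$. The only difference is cosmetic: the paper reaches $L = f_1+f_2$ more directly via $\cot L = (\cot f_1\cot f_2 - 1)/(\cot f_1 + \cot f_2) = \cot(f_1+f_2)$, which sidesteps your spurious branch $L \equiv 0 \pmod{\pi}$, and your final sign discrepancy is just the labeling convention for $f_1,f_2$ at the two endpoints of the chord.
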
 
\begin{proof}
First, note that Proposition \ref{newparam} and (\ref{newf}) imply that
\begin{equation} \label{newx'}
x'=\frac{a \sin f}{\sin\alpha}.
\end{equation}
Next, as before, $L_{xx}L_{yy}=L^2_{xy}$, and substituting from equation (\ref{eq:S2Lderiv}), we obtain
\begin{equation*} 
\cot L=\frac{\kappa(x)\kappa(y)-\sin^2{\alpha}}{\kappa(x)\sin
\alpha+\kappa(y)\sin \alpha}.
\end{equation*}
Substituting $\kappa(x)$ and $\kappa(y)$ from (\ref{newf}) yields 
$$
\cot L=\frac{\cot f_1 \cot f_2 -1}{\cot f_1+\cot f_2}=\cot(f_1+f_2).
$$
Thus, $L=f_1+f_2$, and hence ${L'}={f'_1}+{f'_2}$. By the chain rule,  
$$
L'= L_x {x'}+L_y {y'}=\frac{a\cos{\alpha}}{\sin{\alpha}}\big(\sin f_1-\sin
f_2),
$$
where the last equality is due to (\ref{eq:S2Lderiv}) and (\ref{newx'}). This implies the statement.
\end{proof}

\begin{remark}
{\rm
Equation (\ref{eq:S2f})  appeared in \cite{Tabachnikov2}, in a study of a different rigidity problem, also related to a flotation problem (Ulam's Problem on bodies that float in equilibrium in all positions), and to a problem of bicycle kinematics.
}
\end{remark}

Equation (\ref{eq:S2f}) 
is an analog of equation (\ref{eq:E2f}) but, unlike the Euclidean case, it is non-linear, and we do not know how to solve it. Thus we resort to linearization of the problem, that is, start from a circle $\gamma_0$ of radius $R$  and then deform it to find infinitesimal solutions.
 
Write 
$
f_1(t)=f(t+c),\ f_2(t)=f(t-c),
$
where the constant $c$ depends on the Gutkin curve and the contact angle (in the Euclidean case, $c=\alpha$).
For a circle on $S^2$, we compute the relation between $R, \alpha$ and $c$, and the value of $a$.

\begin{lemma} \label{canda}
One has
\begin{equation*}
\cos\alpha=\frac{\cos c}{\sqrt{\sin^2{R}\cos^2{c}+\cos^2{R}}},\ {\rm or\ equivalently,}\ \cot c = \cos R \cot \alpha,
\end{equation*}
and
\begin{equation*}
\ a=\sqrt{\cos^2 R + \sin^2 \alpha \sin^2 R}.
\end{equation*}
\end{lemma}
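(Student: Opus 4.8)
The plan is to specialize every quantity in the preceding construction to the round circle $\gamma_0$ of radius $R$, where all of $\kappa$, $L$, $f$ become constant, and then simply read off $a$ and the $R$--$\alpha$--$c$ relation from elementary spherical trigonometry.

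First I would record the two geometric constants attached to $\gamma_0$: its geodesic curvature is $\kappa\equiv\cot R$ (as already noted in the remark illustrated in Figure \ref{circle}), and its length is $L(\gamma_0)=2\pi\sin R$. Substituting the constant value $\sqrt{\kappa^2+\sin^2\alpha}=\sqrt{\cot^2 R+\sin^2\alpha}$ into the normalization \eqref{choice} and using $T=2\pi$ gives
$$a=\sin R\,\sqrt{\cot^2 R+\sin^2\alpha}=\sqrt{\sin^2 R\cot^2 R+\sin^2 R\sin^2\alpha}=\sqrt{\cos^2 R+\sin^2\alpha\,\sin^2 R},$$
which is the second assertion of the lemma.

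Next I would pin down the geometric meaning of $c$ for the circle. By Proposition \ref{newparam} the parameter $t$ is, for constant curvature, a constant multiple of arc length, and the choice of $a$ in \eqref{choice} normalizes its total increment to $2\pi$. Since arc length along $\gamma_0$ equals $\sin R$ times the central angle measured at the center $O$, the normalized $t$ coincides with that central angle; hence the defining relation $t(y)=t(x)+2c$ says exactly that the chord $xy$ subtends central angle $2c$ at $O$. Equivalently, $c$ is the angle $\angle xOP$ in the right spherical triangle $PxO$ of the remark, where $P$ is the foot of the perpendicular from $O$, i.e. the midpoint of $xy$. Solving that triangle then finishes the proof: its right angle is at $P$, its hypotenuse is $Ox=R$, the angle at $O$ is $c$, and, because the chord meets the tangent to $\gamma_0$ at angle $\alpha$ while that tangent is orthogonal to the radius $Ox$, the angle at $x$ is $\pi/2-\alpha$. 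The spherical identity relating the hypotenuse to the two acute angles reads $\cos R=\cot c\,\tan\alpha$, that is, $\cot c=\cos R\cot\alpha$. Converting to $\cos\alpha$ is routine: writing $\cot\alpha=\cot c/\cos R$ and $\cos\alpha=\cot\alpha/\sqrt{1+\cot^2\alpha}$, together with the identity $\cos^2 R\sin^2 c+\cos^2 c=\cos^2 R+\sin^2 R\cos^2 c$, yields $\cos\alpha=\cos c/\sqrt{\sin^2 R\cos^2 c+\cos^2 R}$.

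The only genuinely non-mechanical point is the identification of $2c$ with the central angle subtended by $xy$, which rests on carefully tracking the normalization of $t$ built into \eqref{choice}; once that is in place, the computation of $a$ and the trigonometric relation are both elementary. The hyperbolic case is handled identically, replacing $\cot R$ by $\coth R$, $\sin R$ by $\sinh R$, and the spherical right-triangle formula by its hyperbolic counterpart, which produces $\cot c=\cosh R\cot\alpha$ and the corresponding value of $a$.
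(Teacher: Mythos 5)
Your proof is correct. The computation of $a$ is exactly the paper's: specialize \eqref{choice} to the circle using $\kappa\equiv\cot R$ and $L(\gamma_0)=2\pi\sin R$. For the relation between $R$, $\alpha$ and $c$, however, you take a genuinely different route. The paper works extrinsically in $\mathbb{R}^3$: it writes down the great circle $\Gamma(s)$ through $\gamma_0(-c)$ and $\gamma_0(c)$ explicitly, computes the velocity vectors $d\Gamma/ds$ and $d\gamma_0/dt$ at their intersection, and obtains the $\cos\alpha$ formula first, deriving $\cot c=\cos R\cot\alpha$ afterwards by trigonometric identities. You instead argue intrinsically: after justifying (correctly, via Proposition \ref{newparam} and the normalization in \eqref{choice}) that for the circle the parameter $t$ is the central angle, so that the chord subtends angle $2c$ at the center $O$, you apply Napier's rule $\cos(\mathrm{hyp})=\cot A\cot B$ to the right spherical triangle $OxP$ with hypotenuse $R$ and acute angles $c$ and $\pi/2-\alpha$, landing directly on the cleaner form $\cot c=\cos R\cot\alpha$; the $\cos\alpha$ formula then follows by elementary algebra (your identity $\cos^2R\sin^2c+\cos^2c=\cos^2R+\sin^2R\cos^2c$ checks out). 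Your approach is shorter, avoids the coordinate computation the paper calls ``straightforward, albeit tedious,'' and dovetails with the geometric interpretation of $f$ via the triangle $PxO$ given in the paper's Remark; the paper's extrinsic computation has the mild advantage of not requiring the reader to recall the right-triangle identities of spherical trigonometry, and its explicit identification of where the normalized $t$ comes from is something you rightly flag as the one non-mechanical point. Your remarks on the hyperbolic modifications also match the paper's.
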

\begin{proof}
The circle of radius $R$ is parameterized as 
$$
\gamma_0(t)=(\sin R \cos t,\sin R \sin t,\cos R),
$$
where $t\in[0,2\pi]$. We need to find the angle $\alpha$ made by the geodesic segment $[\gamma_0(-c),\gamma_0(c)]$ with this circle.

The great circle through points $\gamma_0(-c)$ and $\gamma_0(c)$ is the parametric curve
$$
\Gamma(s)=\frac{\cos c}{\sqrt{\sin^2{R}\cos^2{c}+\cos^2{R}}} (\sin R \cos c, 0, \cos R) + \sin s (0,1,0),
$$
and $\Gamma(s_0)=\gamma_0(c)$ for  $\sin s_0 = \sin R \sin c$. It remains to compute the velocity vectors $d\Gamma(s)/ds$ and $d\gamma_0(t)/dt$, evaluate them at $s=s_0$ and $t=c$, respectively, and compute the angle between these vectors. This straightforward computation yields the first formula of the lemma. A calculation using trigonometric identities yields the simpler, equivalent, formula.

To obtain the formula for $a$, note that the length and the geodesic curvature of the circle $\gamma_0$ are equal to $2\pi\sin R$ and $\cot R$, respectively. Then (\ref{choice}) yields the result.
\end{proof}

Now we are ready for the proof of Theorem \ref{Conscurv} in the spherical case. Let $\gamma_0$ be a circle of radius $R$. Then the function $f$ is a constant  satisfying $\cot f=\cot R/\sin\alpha$, see (\ref{newf}), and the constants $c$ and $a$ are as in Lemma \ref{canda}. Consider an infinitesimal deformation of the curve in the class of Gutkin curves with the contact angle $\alpha$. Then $f,c$ and $a$ deform as follows
$$
f\mapsto f+\varepsilon g(t),\ c\mapsto c+\varepsilon \delta,\ a\mapsto a+\varepsilon \beta,
$$
where $g(t)$ is a $2\pi$-periodic function, and all the previos relations hold. 
Substitute into equation (\ref{eq:S2f}):
\begin{equation*}
\begin{split}
(a+\varepsilon \beta) \cot \alpha\ (\sin (f+\varepsilon g(t+c+\varepsilon\delta))-\sin (f+\varepsilon g(t-c-\varepsilon\delta)))=\\\varepsilon(g'(t+c+\varepsilon\delta)+g'(t-c-\varepsilon\delta)).
\end{split}
\end{equation*}
Computing modulo $\varepsilon^2$ yields
$$
a \cot\alpha \cos f\ (g(t+c)-g(t-c)) = g'(t+c)+g'(t-c).
$$
As before, this implies that if $g(t)$ is not a constant (which would correspond to a trivial deformation to a circle of possibly different radius) then 
$$
k \cos kc = a \cot\alpha \cos f \sin kc
$$
for each $k$ for which the Fourier coefficient $b_k\ne 0$. Substituting the values of the constants $f$ and  $a$ and eliminating $\alpha$ using Lemma \ref{canda} yields, after a straightforward, albeit tedious, computation:
$$
k \cos kc = \cot c \sin kc\quad {\rm or}\quad k \tan c = \tan kc.
$$

For $k=1$, this formula holds for all $c$, and it remains to explain the condition $k\ge 2$ in the formulation of the theorem. The next proposition shows that the first Fourier coefficient $b_1$ vanishes.

\begin{proposition}
The function $g(t)$ is $L^2$-orthogonal to the first harmonics, that is, its Fourier expansion does not contain $\cos t$ and $\sin t$.
\end{proposition}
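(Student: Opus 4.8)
The plan is to obtain the orthogonality from the global requirement that the deformed curve must close up on $\mathbb{S}^2$; this is the spherical counterpart of the closure identity $\oint \gamma_t\,dt=0$ used at the end of Section \ref{section2}. I would record the spherical Frenet frame $(\gamma,T,N)$, with $T=\gamma'$ and $N=\gamma\times T$, whose structure equations with respect to arc length are $\gamma'=T$, $T'=-\gamma+\kappa N$, $N'=-\kappa T$. Assembling the frame into a matrix $M=(\gamma\mid T\mid N)\in SO(3)$ and converting to the angular parameter $t$ by means of $dx/dt=a\sin f/\sin\alpha$ from (\ref{newx'}) and $\kappa=\sin\alpha\cot f$ from (\ref{newf}), the frame evolves by the linear system
\[
\frac{dM}{dt}=M\,\tilde A(t),\qquad
\tilde A(t)=a\begin{pmatrix} 0 & -\sin f/\sin\alpha & 0\\ \sin f/\sin\alpha & 0 & -\cos f\\ 0 & \cos f & 0\end{pmatrix},
\]
and the closure of the curve is exactly the monodromy condition $M(2\pi)=M(0)$.

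For the circle one has $f\equiv f_0$ and $a\equiv a_0$ constant, so $\tilde A\equiv\tilde A_0$ is constant and $\Phi_0(t):=\exp(t\tilde A_0)$ is a rotation about the fixed unit axis $\hat n=a_0(\cos f_0,0,\sin f_0/\sin\alpha)$; using Lemma \ref{canda} one checks that its angular speed equals $1$, so $\Phi_0(2\pi)=I$ and the circle indeed closes. I would then linearize: writing $f=f_0+\varepsilon g$ and $a=a_0+\varepsilon\beta$ gives $\tilde A=\tilde A_0+\varepsilon\tilde A_1(t)+O(\varepsilon^2)$, and the standard first-order variation of the monodromy (with $\Phi_0(2\pi)=I$) shows that closure of the deformed curve is equivalent, to first order, to
\[
\int_0^{2\pi}\Phi_0(t)\,\tilde A_1(t)\,\Phi_0(t)^{-1}\,dt=0.
\]

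Passing to axial vectors under the hat map, conjugation by $\Phi_0(t)$ becomes rotation of vectors about $\hat n$, and the axial vector of $\tilde A_1$ splits as $(\beta/a_0)\hat n-a_0\,g(t)\,\vec m_0$ for the fixed vector $\vec m_0=(\sin f_0,0,-\cos f_0/\sin\alpha)$. Decomposing $\vec m_0=(\vec m_0\cdot\hat n)\hat n+\vec m_0^\perp$ and integrating term by term against $\Phi_0(t)$, the vanishing integral splits along the orthogonal triple $\hat n,\ \vec m_0^\perp,\ \hat n\times\vec m_0^\perp$ into three scalar equations: the $\hat n$-component merely fixes $\beta$ in terms of $\int_0^{2\pi}g\,dt$, while the transverse components reduce to $\int_0^{2\pi}g(t)\cos t\,dt=0$ and $\int_0^{2\pi}g(t)\sin t\,dt=0$. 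Since $\hat n$ and $\vec m_0$ are never parallel we have $\vec m_0^\perp\neq0$, so these two components are genuinely present, and this is precisely the claimed $L^2$-orthogonality of $g$ to the first harmonics.

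The main obstacle here is bookkeeping rather than conceptual: correctly setting up the frame ODE in the $t$-variable and carrying out the first-order variation of the monodromy, and then verifying that the rotation axis $\hat n$ of $\Phi_0$ absorbs exactly the $\beta$ (zeroth-harmonic) degree of freedom, so that the two transverse directions isolate the first harmonics of $g$. The hyperbolic case is handled identically, replacing $SO(3)$ by the Lorentz group $SO(2,1)$ and the spherical identities of Lemma \ref{canda} by their hyperbolic analogues.
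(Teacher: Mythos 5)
Your argument is correct, but it takes a genuinely different route from the paper. The paper works in spherical coordinates $(\varphi,\theta)$, writes the deformed curve as $(t+\varepsilon f(t),\,R+\varepsilon g(t))$ with periodic $f,g$, and computes the perturbed geodesic curvature via Liouville's formula; the perturbation comes out proportional to $g+g''$, which is automatically $L^2$-orthogonal to the first harmonics because the operator $d^2/dt^2+1$ annihilates them, and the function in the Proposition is proportional to this curvature perturbation by (\ref{newf}). You instead extract the orthogonality from the closure of the curve, phrased as triviality of the first-order variation of the $SO(3)$ monodromy of the frame ODE --- the non-abelian analogue of the closure integral $\int_0^{2\pi}\gamma_t\,dt=0$ used at the end of Section \ref{section2}. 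I checked the computational claims on which your sketch hinges: the unperturbed coefficient matrix has axial vector $(a_0\cos f_0,\,0,\,a_0\sin f_0/\sin\alpha)$, whose norm is $1$ precisely because $a_0=\sqrt{\cos^2R+\sin^2\alpha\,\sin^2R}$ and $\cot f_0=\cot R/\sin\alpha$, so $\Phi_0$ is a unit-speed rotation and $\Phi_0(2\pi)=I$; and $\hat n\times\vec m_0$ is proportional to $(0,1/\sin\alpha,0)\neq 0$, so the transverse components are indeed present and deliver $\int g\cos t\,dt=\int g\sin t\,dt=0$. The two proofs buy different things: the paper's coordinate computation is elementary and exhibits the curvature perturbation as an element of the image of $d^2/dt^2+1$ on periodic functions (hence exactly the orthogonal complement of the first harmonics), while your monodromy argument explains structurally why precisely two conditions appear --- the three-dimensional Lie algebra splits into the rotation axis, which absorbs the $\beta$ degree of freedom, and a transverse plane rotating at unit speed, which pairs with $\cos t$ and $\sin t$ --- and it transfers verbatim to $\mathbb{H}^2$ with $SO(2,1)$ in place of $SO(3)$, as you note.
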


\begin{proof}
Let $\varphi,\theta$ be the spherical coordinates. Recall that the spherical metric is 
$
\sin^2\theta\ d\phi^2 + d\theta^2.
$
The non-perturbed curve is $\gamma_0(t) = (t,R)$, the circle of latitude of radius $R$. Consider its infinitesimal deformation 
$$\gamma_{\varepsilon}(t)=(t+\varepsilon f(t), R+\varepsilon g(t)),$$ 
where $f$ and $g$ are $2\pi$-periodic functions. The curvature of $\gamma_0$ is $\cot R$. Let $\cot R + \varepsilon k(t)$ be the curvature if $\gamma_{\varepsilon}$. Here and below, all computations are modulo $\varepsilon^2$.

Due to (\ref{newf}), 
$$\sin\alpha \cot (f+\varepsilon g(t)) = \cot R + \varepsilon k(t),$$
 hence, up to a constant multiplier, $g=k$. We shall compute $k(t)$ and show that it is free from first harmonics. 
 
We shall use Liouville's formula for curvature of a curve in an orthogonal coordinate system $(u,v)$, see, e.g., \cite{Ca}. Recall this formula. Let $\psi$ be the angle made by the curve with the curves $v=const$, let $K_u$ and $K_v$ be the geodesic curvatures of the coordinate curves $v=const$ and $u=const$, and let $x$ be the arc length parameter on the curve. Then the curvature of the curve is
\begin{equation} \label{Liou}
\frac{d\psi}{dx} + K_u \cos\psi + K_v \sin\psi.
\end{equation}
In our situation, $u$ and $v$ are the longitude and latitude, so $K_v=0$ and $K_u(\varphi,\theta)=\cot\theta$.

Since
$$
x=\sin\theta \cos\varphi,\ y=\sin\theta \sin\varphi,\ z=\cos\theta,
$$
one has: $\gamma_{\varepsilon} =$
\begin{equation*}
\begin{split}
(\sin R \cos t +\varepsilon (g(t)\cos R \cos t - f(t) \sin R \sin t),\\
\sin R \sin t +\varepsilon (g(t)\cos R \sin t + f(t) \sin R \cos t), \\
\cos R -\varepsilon g(t) \sin R).
\end{split}
\end{equation*}
Then $\gamma'_{\varepsilon} =$
\begin{equation*}
\begin{split}
(-\sin R \sin t +\varepsilon (-g\cos R \sin t +g' \cos R \cos t - f \sin R \cos t -f' \sin R \sin t),\\
\sin R \cos t +\varepsilon (g\cos R \cos t + g' \cos R \sin t -f \sin R \sin t + f' \sin R \cos t), \\
 -\varepsilon g' \sin R).
\end{split}
\end{equation*}
It follows that 
$$
|\gamma'_{\varepsilon}|= \sin R + \varepsilon (g \cos R + f' \sin R).
$$
The angle $\psi$ between $\gamma'_{\varepsilon}$ and the circles of latitude is infinitesimal. Therefore $\cos\psi=1$ (modulo $\varepsilon^2$).
Using the formula for $\gamma'_{\varepsilon}$, one  
computes this angle:
$$
\psi=-\varepsilon \frac{g'(t)}{\sin R}
$$
(minus sign is due to the fact that increasing $g$ pushes the curve down to the equator). Hence
$$
\frac{d\psi}{dx}=\frac{\psi'}{x'}= \frac{\psi'}{|\gamma'_{\varepsilon}|}= -\varepsilon \frac{g''(t)}{\sin^2 R}.
$$
Finally, 
$$
\cot \theta = \cot (R+\varepsilon g(t))=\cot R - \varepsilon \frac{g(t)}{\sin^2 R}.
$$
Now (\ref{Liou}) implies that, up to a constant factor, $k(t)=g(t)+g''(t)$. Since the differential operator $d^2/dx^2 + 1$ ``kills" the first harmonics,  the result follows.
\end{proof}

This concludes the proof in the spherical case.

For the case of $\mathbb{H}^2$, we apply a similar method, so we briefly describe the differences. The  
formulas for the partials of $L$ read
\cite{Bialy2}:
$$ L_x = - \cos {\alpha}, L_y = \cos{\alpha}, $$
\begin{equation*}
L_{xy}=\frac{\sin^2{\alpha}}{\sinh L}, \quad L_{xx} = \frac{\sin^2 \alpha}{\tanh
L} - \kappa(x) \sin \alpha, \quad L_{yy} = \frac{\sin^2 \alpha}{\tanh L} -
\kappa(y) \sin \alpha.
\end{equation*}

The parameterization of a Gutkin curve is given by the formula $x_t = a/\sqrt{\kappa(x)^2-\sin^2{\alpha}}$ where the constant $a$ is normalized so that the parameter $t$ takes values in $[0,2\pi]$. One defines the function $f(t)$ by $\coth f = \kappa/\sin\alpha$, and as before, one obtains a difference-differential equation
\begin{equation*}
{a\cot{\alpha}}\ \big(\sinh f_1-\sinh f_2)={f'_1}+{f'_2}.
\end{equation*} 
Analogs of Lemma \ref{canda} hold:
\begin{equation*}
\cos\alpha=\frac{\cos c}{\sqrt{\cosh^2{R}-\sinh^2{R}\cos^2{c}}},\ {\rm or\ equivalently,}\ \cot c = \cosh R \cot \alpha,
\end{equation*}
and
\begin{equation*}
a=\sqrt{\cosh^2 R - \sin^2 \alpha \sinh^2 R}.
\end{equation*}

The computations in Euclidean space $\mathbb R^3$ involving the unit sphere are replaced by similar computations in the Minkowski space $\mathbb R^{1,2}$ involving hyperboloid of two sheets, used as a model of $\mathbb{H}^2$.

\section{Gutkin Polygons} \label{sect:polygon}

Refer to the introduction for the definition of a Gutkin $(n,k)$-gon. Let $G(n,k)$ denote the set of all Gutkin $(n,k)$-gons. Given $P \in G(n,k)$,  it will be convenient to think of $P$ as being embedded in the complex plane $\mathbb{C}$. Let $l_i$ denote the side length, $|v_{i+1} -v_i|$. 

Notice that if $n = 2k$, for every index, $i$, one has $i - k = i + k$. Therefore in this case, each vertex is the end point of exactly one diagonal. 
If $n \neq 2k$ then $i -k \neq i+k$, so each vertex is the endpoint of two diagonals. In this case, for each
$v_i$, we call the angle between the two diagonals $\beta_i$, i.e. $\beta_i =
\angle v_{i-k} v_i v_{i+k}$. 

\begin{figure}
 \centering
        \begin{subfigure}[b]{0.46\textwidth}
                \includegraphics[width=\textwidth]{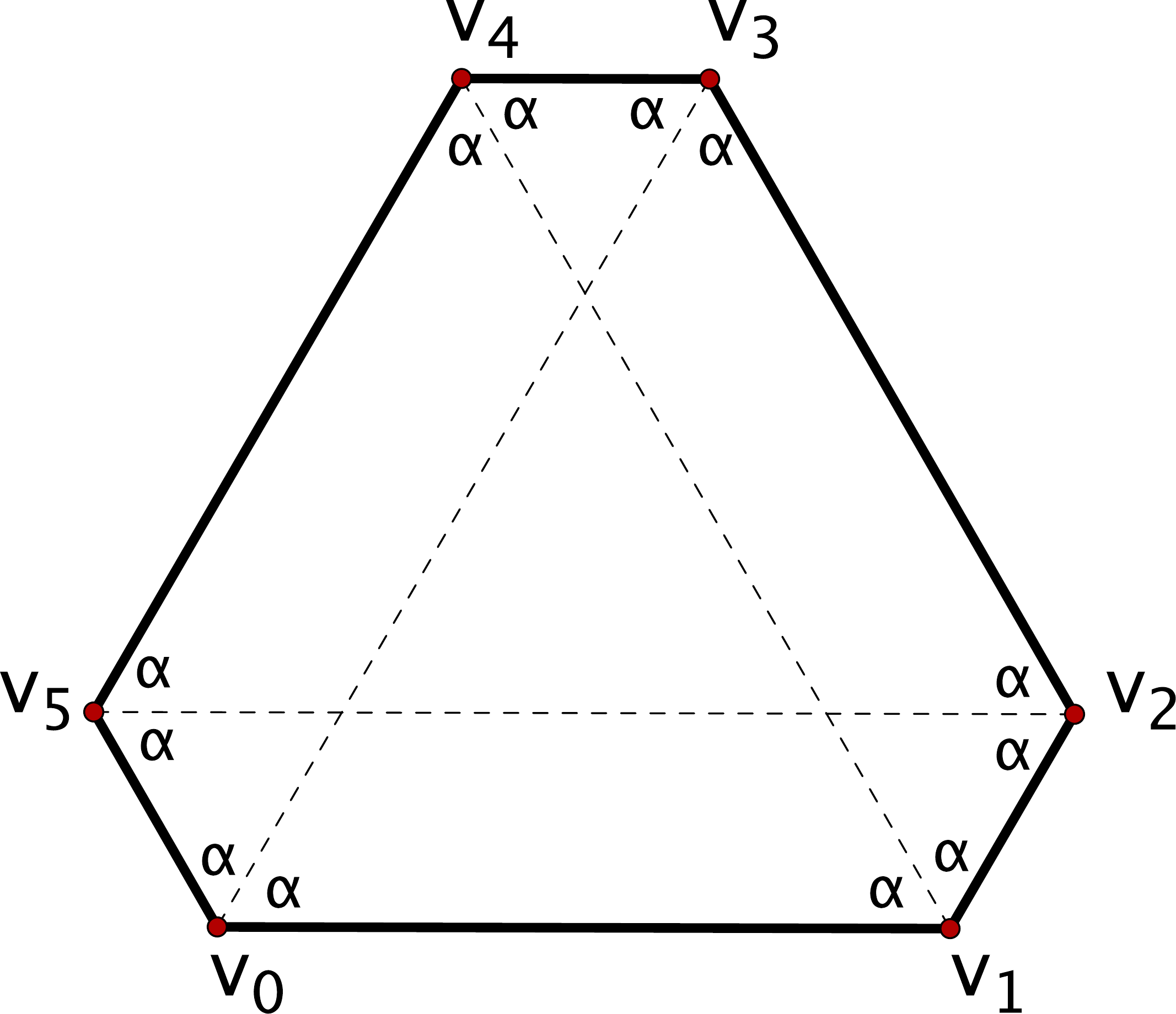}
                \caption{Gutkin $(6,3)$-gon.}
                \label{fig:6-3-withalphas}
        \end{subfigure} \quad
        \begin{subfigure}[b]{0.46\textwidth}
                \includegraphics[width=\textwidth]{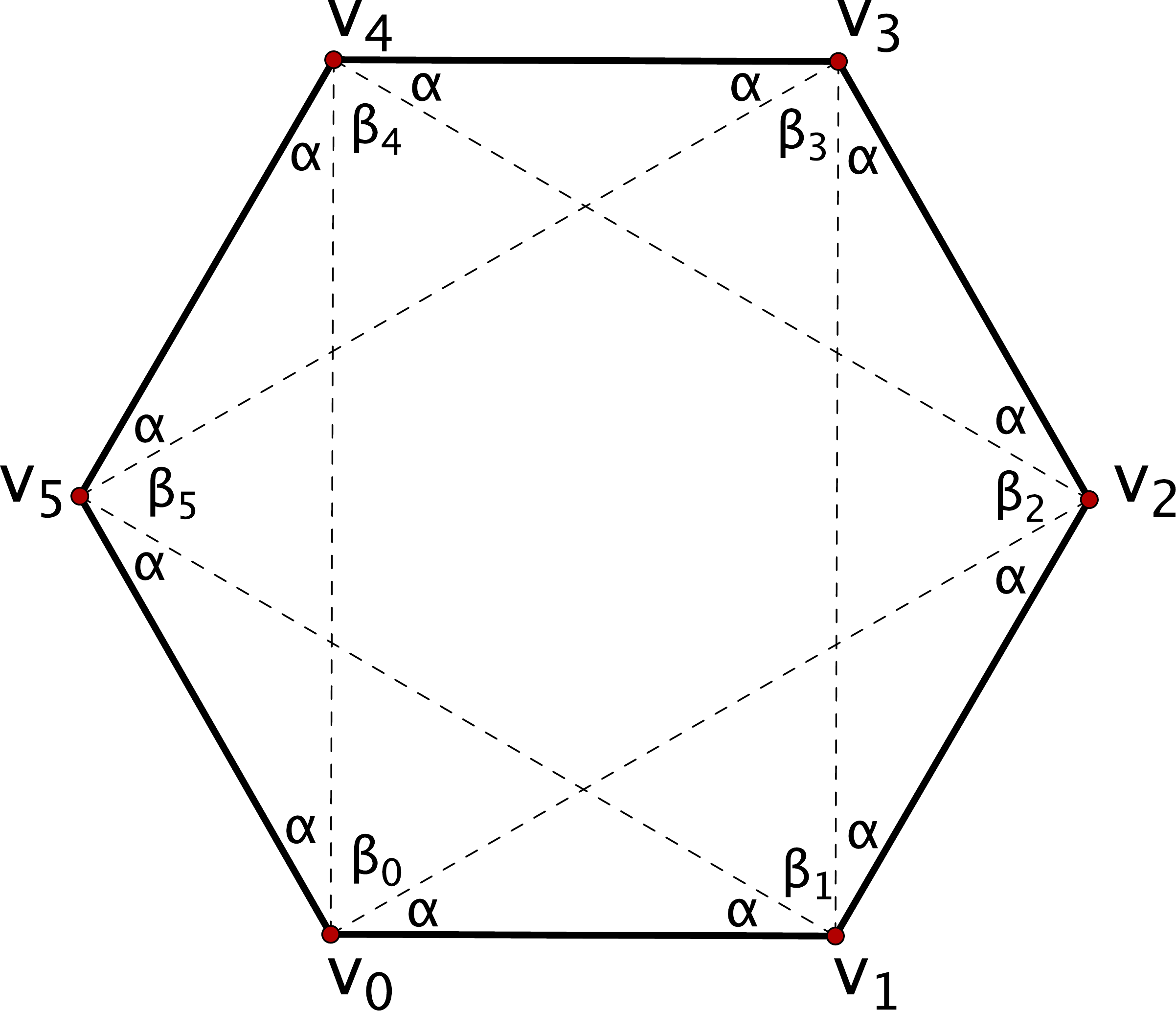}
                \caption{Gutkin $(6,2)$-gon.}
                \label{fig:6-2-withbetas}
        \end{subfigure}
        \caption{Two Gutkin polygons with angles labelled.}\label{fig:angles}
\end{figure}

The first two propositions in this section will establish basic geometric properties of a Gutkin $(n,k)$-gon. 

\begin{proposition}
Given $n$ and $k$, the associated contact angle is equal to $\pi(k-1)/n$ for any Gutkin $(n,k)$-gon.
\end{proposition}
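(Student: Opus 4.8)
The plan is to exploit the elementary fact that the interior angles of an $m$-gon sum to $(m-2)\pi$, applied both to $P$ itself and to the sub-polygons cut off by the $k$-diagonals. Write $\gamma_j$ for the interior angle of $P$ at the vertex $v_j$, so that $\sum_{j=0}^{n-1}\gamma_j = (n-2)\pi$ by convexity.

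First I would fix an index $i$ and consider the $(k+1)$-gon with vertices $v_i, v_{i+1}, \ldots, v_{i+k}$, whose boundary consists of the $k$ consecutive edges of $P$ together with the $k$-diagonal $|v_i v_{i+k}|$. Because $P$ is convex the diagonal lies inside $P$, so this is a genuine convex $(k+1)$-gon, and its interior angles at the intermediate vertices $v_{i+1}, \ldots, v_{i+k-1}$ coincide with the interior angles $\gamma_{i+1}, \ldots, \gamma_{i+k-1}$ of $P$ (near each such vertex the two boundary edges are the same two edges of $P$, and the sub-polygon lies on the interior side). The two remaining angles, at $v_i$ and at $v_{i+k}$, are exactly the contact angles $\angle v_{i+1}v_i v_{i+k}$ and $\angle v_{i+k-1}v_{i+k}v_i$, both equal to $\alpha$ by the Gutkin hypothesis. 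The angle-sum identity for this $(k+1)$-gon therefore reads
$$
2\alpha + \sum_{j=i+1}^{i+k-1}\gamma_j = (k-1)\pi .
$$

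Next I would sum this identity over all $i = 0, \ldots, n-1$ with indices taken modulo $n$. The right-hand side becomes $n(k-1)\pi$. On the left the term $2\alpha$ contributes $2n\alpha$, while in the double sum each interior angle $\gamma_j$ is counted once for every $i$ with $j-k+1 \le i \le j-1 \pmod{n}$, i.e. exactly $k-1$ times, uniformly in $j$. Hence the double sum equals $(k-1)\sum_{j=0}^{n-1}\gamma_j = (k-1)(n-2)\pi$. Equating the two sides gives
$$
2n\alpha + (k-1)(n-2)\pi = n(k-1)\pi ,
$$
and solving for $\alpha$ yields $\alpha = \pi(k-1)/n$, as claimed.

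The computation itself is routine; the only point requiring care is the combinatorial bookkeeping in the summation step — verifying that each $\gamma_j$ appears exactly $k-1$ times in the double sum, independently of $j$ and of whether $n = 2k$ — together with the geometric justification, via convexity, that the intermediate angles of the sub-polygon genuinely equal the interior angles of $P$ rather than their reflex counterparts. I expect this bookkeeping to be the main, though modest, obstacle.
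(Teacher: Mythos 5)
Your proof is correct, and it takes a genuinely different route from the paper's. The paper splits into two cases: for $n=2k$ it notes that every interior angle equals $2\alpha$ and applies the angle-sum formula to $P$ itself; for $n\neq 2k$ it writes $(n-2)\pi = \sum_i\beta_i + 2n\alpha$, where $\beta_i$ is the angle between the two diagonals at $v_i$, and then evaluates $\sum_i\beta_i$ by observing that the diagonals trace out $\gcd(n,k-1)$ star-polygons $Q_j$ whose vertex sets partition the vertices of $P$, so that $\sum_i \beta_i$ is the total angle sum of the $Q_j$'s; finally it pins down the value of $\alpha$ by comparison with the regular polygon. Your argument instead applies the angle-sum identity to the $(k+1)$-gon cut off by each single diagonal and double-counts: the identity $2\alpha+\sum_{j=i+1}^{i+k-1}\gamma_j=(k-1)\pi$, summed over $i$ with each $\gamma_j$ appearing exactly $k-1$ times, yields $\alpha=\pi(k-1)/n$ directly. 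This buys you a uniform treatment of the $n=2k$ and $n\neq 2k$ cases, avoids the $\gcd$ bookkeeping and the partition-into-$Q_j$'s observation entirely, and computes $\alpha$ explicitly rather than deducing its value from the regular example; the paper's decomposition, on the other hand, introduces the angles $\beta_i$ and the polygons $Q_j$, which it reuses later (e.g.\ in Proposition \ref{prop:angle} and Figure \ref{fig:Q0}), so its detour is not wasted. Your counting claim (each $\gamma_j$ counted for $i\in\{j-k+1,\dots,j-1\}$, i.e.\ $k-1$ times, all distinct modulo $n$ since $k-1<n$) and the convexity justification for the intermediate angles are both sound.
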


\begin{proof}
Let $P \in G(2k, k)$ for some $k \geq 2$. For each $i$, $\angle v_{i+k} v_i v_{i+1} = \angle v_{i+k} v_i v_{i-1}$ = $\alpha$. Then all interior angles of $P$ are equal to $2 \alpha$. Since the sum of the interior angles of any $n$-gon is equal to $\pi(n-2)$, we have $\alpha = \pi (n-2)/(2n)$, which is
equal to $\pi (k-1)/n$.

When $n \neq 2k$, it suffices to show that $\alpha$ is determined by $n$ and $k$. First, note that the sum of the interior angles of the Gutkin polygon equals
$(n-2) \pi$ and also equals 
$$\sum_{i = 0}^{n-1} \beta_i + 2n \alpha,$$
see Figure \ref{fig:angles}. Therefore, 
$$\alpha = \frac{\pi (n-2) - \sum_{i=0}^{n-1} \beta_i}{2n}.$$
 To show that $\alpha$ is determined by $n$ and $k$, we show that $\sum_{i=0}^{n-1} \beta_i$ is determined by $n$ and $k$.

For fixed $n$ and $k$, let $P\in G(n,k)$. For $1 \leq j \leq \gcd(n,k-1)$, define the polygon 
$$Q_j = \overline{v_j v_{j+k} v_{j+2k} \cdots v_{j + (nk / \gcd(n,k-1))-1}}.$$
 Two examples of $Q_j$'s are shown in Figure \ref{fig:Q0}. Note that the sides of $Q_j$ are the diagonals of $P$. The
vertices of all $Q_j$'s form a disjoint partition of $\{v_0,v_1,\dotsc, v_{n-1} \}$ into $\gcd(n,k-1)$  subsets of equal size. Thus, the sum of the interior angles of all $Q_j$'s are $\sum_{i=0}^{n-1} \beta_i$. Since the sum of the interior angles of $Q_j$ is $\pi(n/\gcd(n,k-1) -2)$ for all $j$, $\sum_{i=0}^{n-1} \beta_i$ is determined by $n$ and $k$.

\begin{figure}
        \centering
        \begin{subfigure}[b]{0.46\textwidth}
                \includegraphics[width=\textwidth]{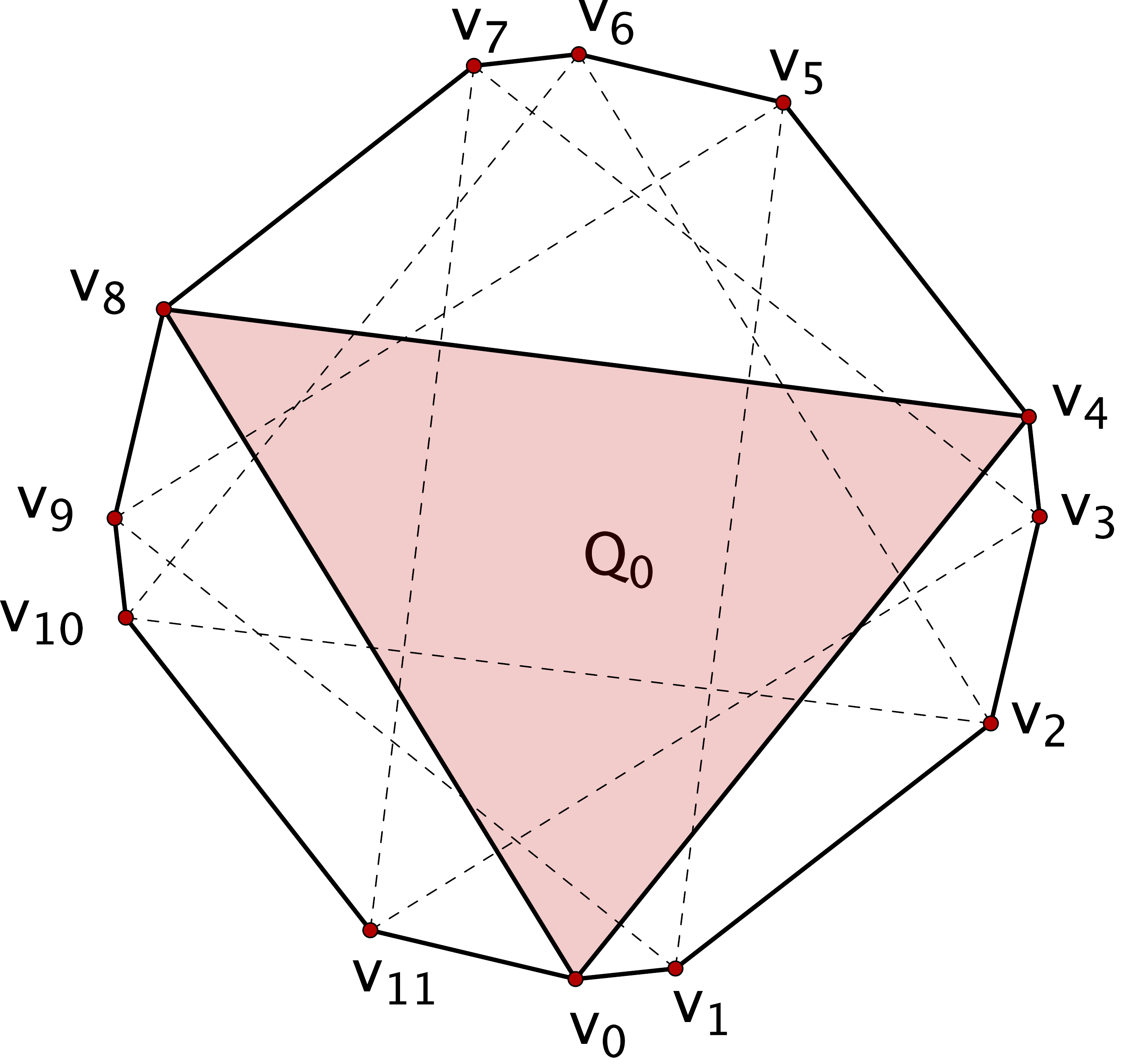}
                \caption{$Q_0$ for a Gutkin $(12,4)$-gon}
                \label{fig:12-4-withQ0}
        \end{subfigure} \quad
        \begin{subfigure}[b]{0.46\textwidth}
                \includegraphics[width=\textwidth]{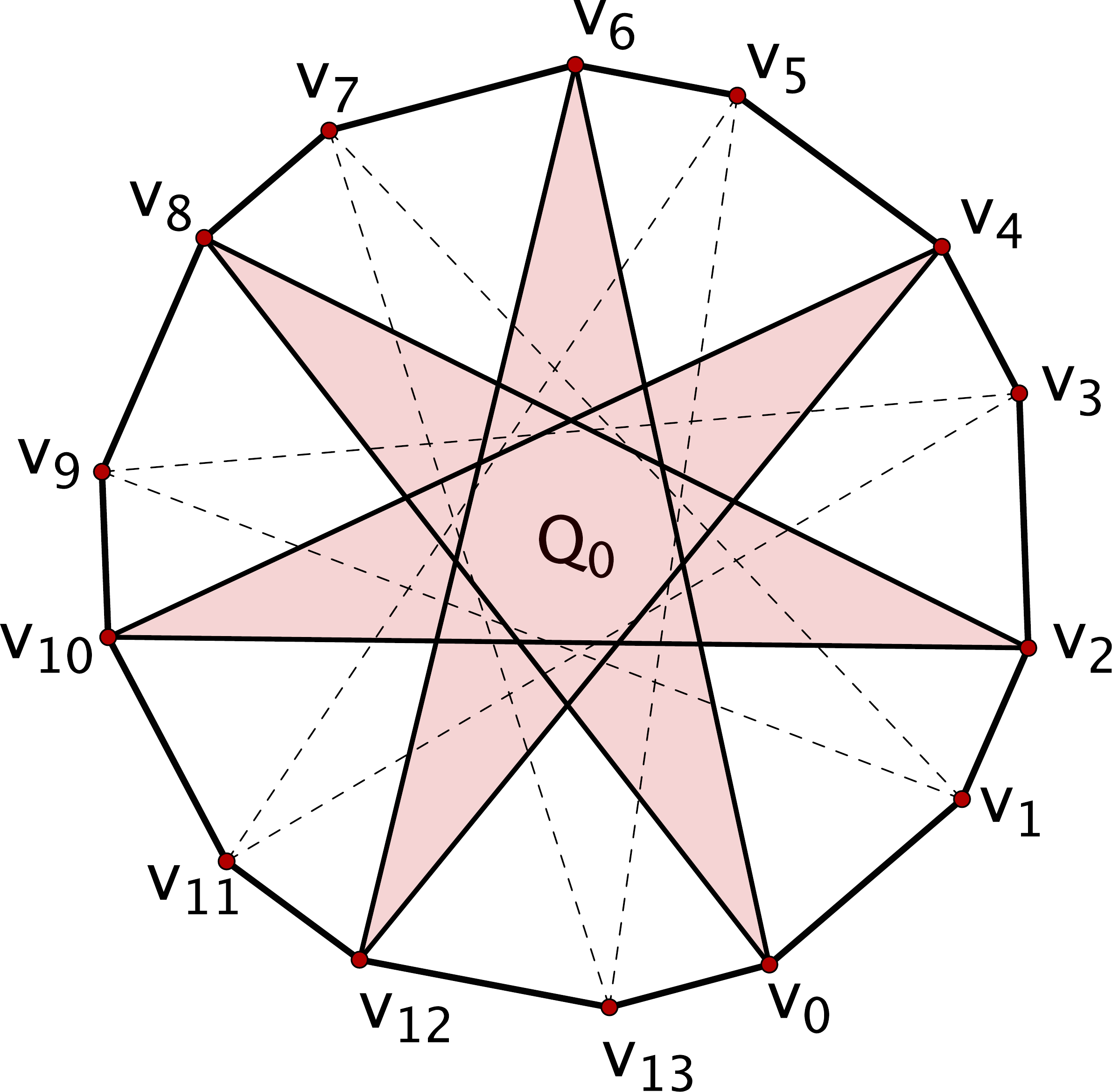}
                \caption{$Q_0$ for a Gutkin $(14,6)$-gon}
                \label{fig:14-6-withQ0}
        \end{subfigure}
        \caption{Polygons $Q_0$ on two Gutkin polygons.}\label{fig:Q0}
\end{figure}

For a regular polygon, $\alpha = \pi (k-1) / n $. Since $\alpha$ is determined by $n$ and $k$, the above equation is true for all polygons in $G(n,k)$.
\end{proof}

\begin{proposition} \label{prop:angle}
In a Gutkin $(n,k)$-gon, the interior angles associated to vertices $v_{i}$ and $v_{i+k-1}$ are equal for all $i$.
\end{proposition}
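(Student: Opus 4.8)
The plan is to exploit the sub-polygon that a single $k$-diagonal cuts off from $P$. Fix an index $i$ and consider the $(k+1)$-gon $C_i$ with vertices $v_i, v_{i+1}, \ldots, v_{i+k}$ (indices mod $n$), whose boundary consists of the $k$ consecutive edges $|v_i v_{i+1}|, \ldots, |v_{i+k-1} v_{i+k}|$ of $P$ together with the closing $k$-diagonal $|v_{i+k} v_i|$. Since $P$ is convex, $C_i$ is a convex $(k+1)$-gon, so its interior angles sum to $(k-1)\pi$.

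The key step is to read off these interior angles from the data of $P$. Write $\theta_j$ for the interior angle of $P$ at $v_j$. First, at the two endpoints of the diagonal the interior angles of $C_i$ are exactly the contact angles, namely $\angle v_{i+1} v_i v_{i+k} = \alpha$ at $v_i$ and $\angle v_{i+k-1} v_{i+k} v_i = \alpha$ at $v_{i+k}$, directly by the defining Gutkin property. Second, at every intermediate vertex $v_{i+m}$ with $1 \le m \le k-1$, the two edges of $C_i$ incident to $v_{i+m}$ are precisely the edges of $P$ incident to $v_{i+m}$, so the interior angle of $C_i$ there coincides with $\theta_{i+m}$. Summing the interior angles of $C_i$ therefore yields the identity
$$2\alpha + \sum_{m=1}^{k-1} \theta_{i+m} = (k-1)\pi,$$
which holds for every index $i$.

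The conclusion then comes from a discrete-difference argument. Writing the displayed identity for $i$ and for $i+1$ and subtracting, the constant $2\alpha$ and all common terms $\theta_{i+2}, \ldots, \theta_{i+k-1}$ cancel, leaving $\theta_{i+1} = \theta_{i+k}$, that is, $\theta_j = \theta_{j+(k-1)}$ for all $j$. Taking $j = i$ gives $\theta_i = \theta_{i+k-1}$, which is exactly the assertion that the interior angles at $v_i$ and $v_{i+k-1}$ are equal.

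I do not expect a genuine obstacle here; the only point requiring care is the bookkeeping behind the second claim above, namely that the closing diagonal does not disturb the angle measured at an interior vertex $v_{i+m}$. This is precisely where convexity of $P$ enters: it guarantees that $C_i$ is a simple convex polygon and that, near each intermediate vertex, the boundary of $C_i$ coincides with that of $P$, so the angle-sum formula applies and the intermediate angles really are the $\theta_{i+m}$. I would also remark that the argument is uniform across the regimes $n=2k$ and $n\neq 2k$, since each cap $C_i$ uses a single diagonal and never refers to the angle $\beta_i$ between two diagonals; the degenerate case $k=2$, where $C_i$ is just the triangle $v_i v_{i+1} v_{i+2}$, is automatically included.
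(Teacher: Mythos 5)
Your proof is correct, but it takes a genuinely different route from the paper's. The paper argues locally with the self-intersecting (bow-tie) quadrilateral $B_i=v_iv_{i+k}v_{i+k+1}v_{i+1}$ bounded by two \emph{consecutive} $k$-diagonals: its two triangles share vertical angles at the intersection point $w_i$ of the diagonals, the angles at $v_i$ and $v_{i+k+1}$ both equal $\alpha$, hence the angles at $v_{i+1}$ and $v_{i+k}$ agree; writing these as $\alpha+\beta_{i+1}$ and $\alpha+\beta_{i+k}$ gives $\beta_{i+1}=\beta_{i+k}$, and the claim follows from $\theta_j=2\alpha+\beta_j$. You instead use the angle sum of the convex $(k+1)$-gon cap cut off by a \emph{single} diagonal, obtaining $2\alpha+\sum_{m=1}^{k-1}\theta_{i+m}=(k-1)\pi$ for every $i$, and difference consecutive instances to get $\theta_{i+1}=\theta_{i+k}$. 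Both arguments are elementary and complete. Yours buys uniformity: it never invokes the angles $\beta_i$ between two diagonals, so the case $n=2k$ (where $\beta_i$ is not defined and the decomposition $\theta_j=2\alpha+\beta_j$ needs a separate, if trivial, remark) is handled on the same footing as $n\neq 2k$; moreover, summing your identity over all $i$ recovers $\alpha=\pi(k-1)/n$ from the preceding proposition for free. The paper's bow-tie argument, in turn, is purely local (only two adjacent diagonals are needed) and produces the relation $\beta_{i+1}=\beta_{i+k}$ in exactly the notation set up before the proposition. Your cautionary point --- that at each intermediate vertex the cap's interior angle coincides with that of $P$ --- is the right thing to check and is indeed supplied by convexity.
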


\begin{proof} 
Consider the self-intersecting quadrilateral $B_i = v_i v_{i+k} v_{i+k+1} v_{i+1}$, see Figure \ref{fig:12-4-B4}. Let $w_{i}$ denote the intersection point of the two diagonals, $\overline{v_i v_{i+k}}$ and $\overline{v_{i+1} v_{i+k+1}}$. Notice that $B_i$ is comprised of two triangles meeting at $w_i$.
The opposite angles at $w_i$ are equal, and the angle at $v_i$ and $v_{i+k+1}$ is equal to $\alpha$. Therefore the angles at $v_{i+1}$ and $v_{i+k}$ are equal, which are also equal to $\alpha + \beta_{i+1}$ and $\alpha + \beta_{i+k}$, respectively. Then $\beta_{i+1} = \beta_{i+k}$. Since $i$  the interior angle associated to any $v_j$ is equal to $2\alpha + \beta_j$,  the desired result follows.
\end{proof}

\begin{figure}
  \centering
    \includegraphics[width=.5\textwidth]{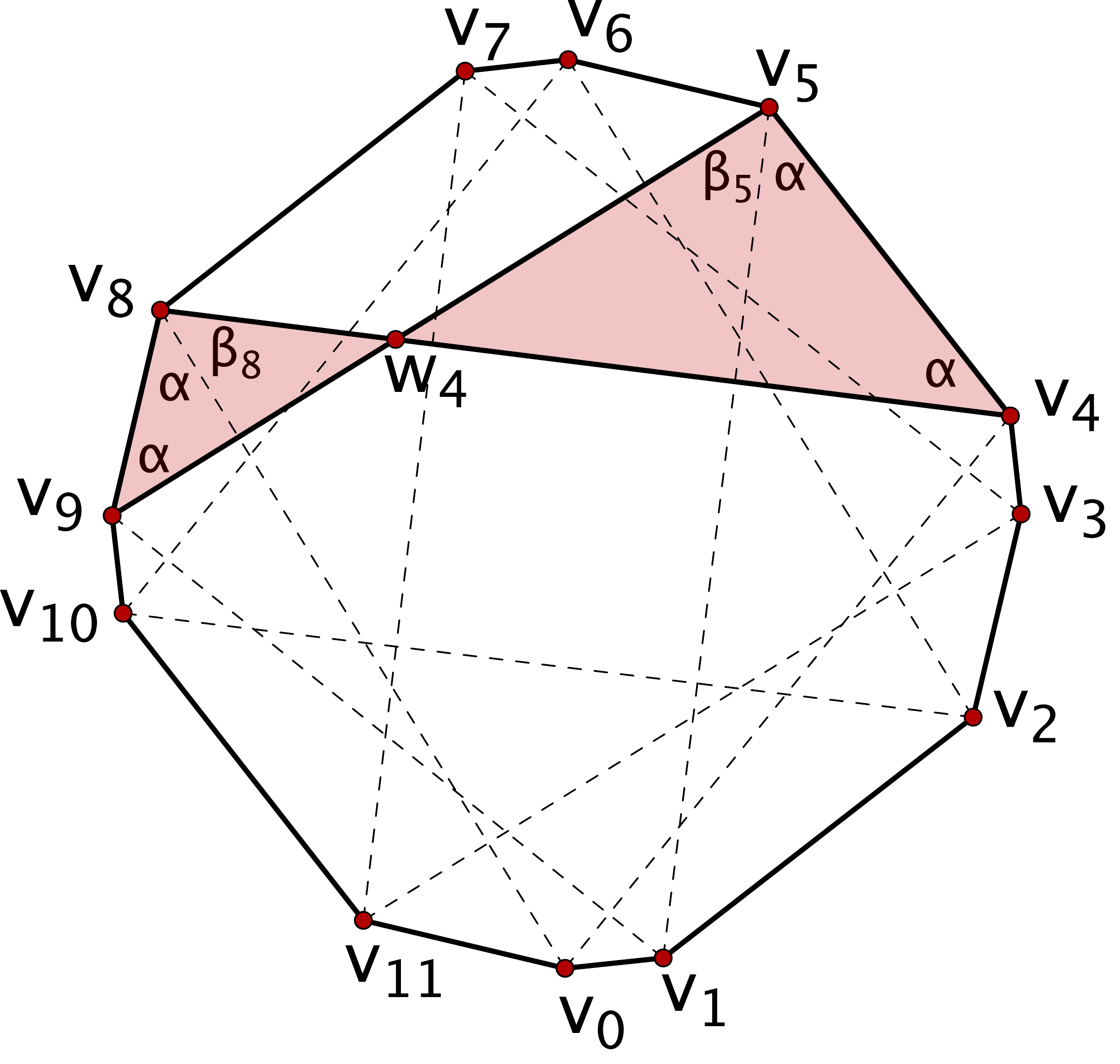}
  \caption{A Gutkin $(12,4)$-gon. The shaded region is $B_4$.}
  \label{fig:12-4-B4}
\end{figure}

\begin{corollary} \label{cor:angle}
If $n$ and $k-1$ are co-prime, then any $P\in G(n,k)$ is equiangular. 
\end{corollary}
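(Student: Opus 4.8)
The plan is to read off the corollary almost immediately from Proposition \ref{prop:angle}, together with an elementary number-theoretic observation about the index arithmetic modulo $n$. Write $A_i$ for the interior angle of $P$ at the vertex $v_i$. Proposition \ref{prop:angle} asserts that $A_i = A_{i+k-1}$ for every $i$, where indices are taken modulo $n$. So the entire content of the corollary will be to show that this single relation, iterated, forces all the $A_i$ to coincide when $\gcd(n,k-1)=1$.

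First I would observe that the relation $A_i = A_{i+k-1}$ means $A$ is constant along each orbit of the shift map $\sigma\colon i \mapsto i + (k-1) \pmod{n}$ acting on the index set $\mathbb{Z}/n\mathbb{Z}$. The orbits of $\sigma$ are precisely the cosets of the cyclic subgroup $\langle k-1\rangle \subseteq \mathbb{Z}/n\mathbb{Z}$ generated by $k-1$, and each orbit has size $n/\gcd(n,k-1)$. Hence, for a fixed starting index $i$, the chain
$$
A_i = A_{i+(k-1)} = A_{i+2(k-1)} = \cdots
$$
identifies the angles at all vertices whose indices lie in the same orbit.

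Next I would apply the hypothesis $\gcd(n,k-1)=1$. Under this assumption $k-1$ is a generator of $\mathbb{Z}/n\mathbb{Z}$, so $\langle k-1\rangle = \mathbb{Z}/n\mathbb{Z}$ and there is a single orbit containing all of $\{0,1,\dots,n-1\}$. Consequently every index is reachable from $0$ by repeatedly adding $k-1$ modulo $n$, and the iterated chain above shows $A_0 = A_1 = \cdots = A_{n-1}$. By definition this says $P$ is equiangular, completing the proof.

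There is no real obstacle here: the geometric input is entirely packaged in Proposition \ref{prop:angle}, and what remains is the standard fact that an element of $\mathbb{Z}/n\mathbb{Z}$ generates the whole group exactly when it is coprime to $n$. The only point deserving a sentence of care is the transition from ``constant on each orbit of $\sigma$'' to ``globally constant,'' which is valid precisely because coprimality collapses the orbit structure to a single orbit; I would state this explicitly rather than leave it implicit, since it is exactly the place where the coprimality hypothesis is used.
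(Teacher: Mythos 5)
Your proposal is correct and is exactly the argument the paper intends (the paper states the corollary without proof, as an immediate consequence of Proposition \ref{prop:angle}): the relation $\beta_{i+1}=\beta_{i+k}$, equivalently $A_i = A_{i+k-1}$ on interior angles, is constant along orbits of $i\mapsto i+(k-1)$ in $\mathbb{Z}/n\mathbb{Z}$, and coprimality of $n$ and $k-1$ makes this a single orbit. No gaps; your explicit remark about where coprimality enters is a reasonable addition.
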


The case $n=2k$ is special in that Gutkin polygons abound (in the continuous case, this corresponds to the contact angle $\pi/2$, that is, when Gutkin curves are curves of constant width). Let $\mathbb{R}^{n}_{+}$ be the positive ortant. 

\begin{proposition}
The dimension of the space of Gutkin $(2k,k)$-gons, considered modulo similarities, equals $k-2$. This quotient space is 
the intersection of a $(k-2)$-dimensional affine subspace with $\mathbb{R}_+^{k}$. 
\end{proposition}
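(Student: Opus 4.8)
The plan is to encode such a polygon by its edge vectors and reduce to linear algebra, after first recasting the equiangular-chord condition as the more flexible condition of having constant width.

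\emph{Setup.} By the discussion above (the contact-angle proposition applied to $n=2k$), any $P\in G(2k,k)$ is equiangular with interior angles $2\alpha$, $\alpha=\pi(k-1)/(2k)$, and at each vertex the unique main diagonal $\overline{v_j v_{j+k}}$ bisects the interior angle. After a similarity I place $v_0$ at the origin and fix the edge directions to be $\theta_j=j\pi/k$, so that the $j$-th edge is $e_j=v_{j+1}-v_j=l_j\zeta^j$, where $\zeta=e^{\pi\mathbf{i}/k}$ (with $\mathbf{i}=\sqrt{-1}$) and $l_j>0$. The polygon is then determined by $(l_0,\dots,l_{2k-1})$ subject to the closure relation $\sum_{j=0}^{2k-1}l_j\zeta^j=0$, and the one remaining (scaling) similarity acts by $l_j\mapsto\lambda l_j$. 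Since $\zeta^{j+k}=-\zeta^j$, the diagonals $D_j:=v_{j+k}-v_j=\sum_{m=j}^{j+k-1}l_m\zeta^m$ satisfy the telescoping identity $D_{j+1}-D_j=e_{j+k}-e_j=-(l_j+l_{j+k})\zeta^j$.

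\emph{Key step (Gutkin $\Leftrightarrow$ constant width).} I will show that $P$ is Gutkin if and only if the widths $w_j:=l_j+l_{j+k}$ are all equal. The bisection property says precisely that $D_j$ points in direction $\theta_j+\alpha$, i.e. $D_j=d_j\,\zeta^j\mu$ with $d_j>0$ and $\mu=e^{\mathbf{i}\alpha}$. Substituting into the telescoping identity and taking imaginary parts gives $d_{j+1}\sin(\alpha+\pi/k)=d_j\sin\alpha$; since $2\alpha+\pi/k=\pi$ one has $\sin(\alpha+\pi/k)=\sin\alpha$, forcing all $d_j$ equal to a common $d$, and then the real part gives $w_j=d\,\mu(1-\zeta)=2d\sin(\pi/2k)$, independent of $j$. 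Conversely, on a closed equiangular $2k$-gon with all $w_j=W$ the identity reads $D_{j+1}=D_j-W\zeta^j$; summing over a half-period and using $D_k=-D_0$ pins down $D_0=W/(1-\zeta)$, whose argument is $\pi/2-\pi/(2k)=\alpha$, and induction then yields $D_j=(W/(1-\zeta))\zeta^j$, so $D_j$ bisects at every vertex. This equivalence, together with the trigonometric identities that hinge on the specific value $\alpha=\pi(k-1)/(2k)$, is the step I expect to be the main content of the proof.

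\emph{Dimension count and realization.} Constant width lets me write $l_{j+k}=W-l_j$, so a solution is determined by $(l_0,\dots,l_{k-1},W)\in\mathbb{R}^{k+1}$. Using $\zeta^{j+k}=-\zeta^j$, the closure relation collapses to the single complex equation $\sum_{j=0}^{k-1}l_j\zeta^j=W/(1-\zeta)$. As the linear map $(l_0,\dots,l_{k-1},W)\mapsto\sum_{j<k}l_j\zeta^j$ already surjects onto $\mathbb{C}$ for $k\ge 2$, this amounts to two independent real conditions, so the solution space has dimension $(k+1)-2=k-1$; quotienting by the scaling action leaves $k-2$. Concretely, normalizing the width to $W=1$ (which fixes the scaling) identifies the quotient with $\{(l_0,\dots,l_{k-1})\in\mathbb{R}^k:\ \sum_{j=0}^{k-1}l_j\zeta^j=1/(1-\zeta)\}$, a $(k-2)$-dimensional affine subspace of $\mathbb{R}^k$. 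Convexity of $P$ is equivalent to positivity of all side lengths (that is $0<l_j$ together with $l_{j+k}=1-l_j>0$), and intersecting the affine subspace with this positivity locus exhibits the quotient space as the asserted intersection with $\mathbb{R}_+^k$. Finally, the regular $2k$-gon is $(l_0,\dots,l_{k-1})=(\tfrac12,\dots,\tfrac12)$, which satisfies $\sum_{j<k}\tfrac12\zeta^j=\tfrac12\cdot\frac{\zeta^k-1}{\zeta-1}=\frac{1}{1-\zeta}$ and lies in the interior of the positivity locus; hence the intersection is nonempty of the full dimension $k-2$. Besides the equivalence above, the point I expect to require the most care is matching the convexity (positivity) locus precisely with the intersection with $\mathbb{R}_+^k$.
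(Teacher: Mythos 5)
Your proof is correct and follows essentially the same route as the paper: both reduce to the relation $l_{j+k}=W-l_j$ (the paper's $y_i=2\cos\alpha-x_i$ after normalizing the common diagonal length) and then count the two real closure conditions on $(l_0,\dots,l_{k-1})$ modulo scaling. The only differences are cosmetic --- you prove the equal-diagonal/constant-width lemma by a complex telescoping identity where the paper uses the similar isosceles triangles in the bow-tie $B_i$ --- and the caveat you flag about the positivity locus really being the box $0<l_j<W$ rather than all of $\mathbb{R}_+^{k}$ is an imprecision the paper's own statement shares.
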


\begin{proof}
Let $P$ be a Gutkin $(2k,k)$-gon. Consider the diagonals $\overline{v_i v_{i+k}}$ and $\overline{v_{i+1} v_{i+k+1}}$ of $G(2k,k)$, see  Figure \ref{fig:6-3-B1}. Let $w_{i}$ denote the intersection of these two diagonals, and let $B_i$ be the bow-tie-shaped polygon, $\overline{v_i v_{i+1} v_{i+k+1} v_{i+k}}$. Notice that $\triangle v_i v_{i+1} w_i$ and $\triangle v_{i+k+1}
v_{i+k} w_i$ are both isosceles triangles and are similar.

Thus, $v_i w_i = v_{i+1} w_i$ and $v_{i+k} w _i = v_{i+k+1} w_i$. Hence, the
diagonals $v_i v_{i+k}$ and $v_{i+1} v_{i+k+1}$ have equal length. Since $i$ is arbitrary and the indices are circular, all diagonals have the same length, say, $h$. Since $h$ is just a scaling factor, we set $h=1$ for the remainder of the proof.

\begin{figure}
  \centering
    \includegraphics[width=.5\textwidth]{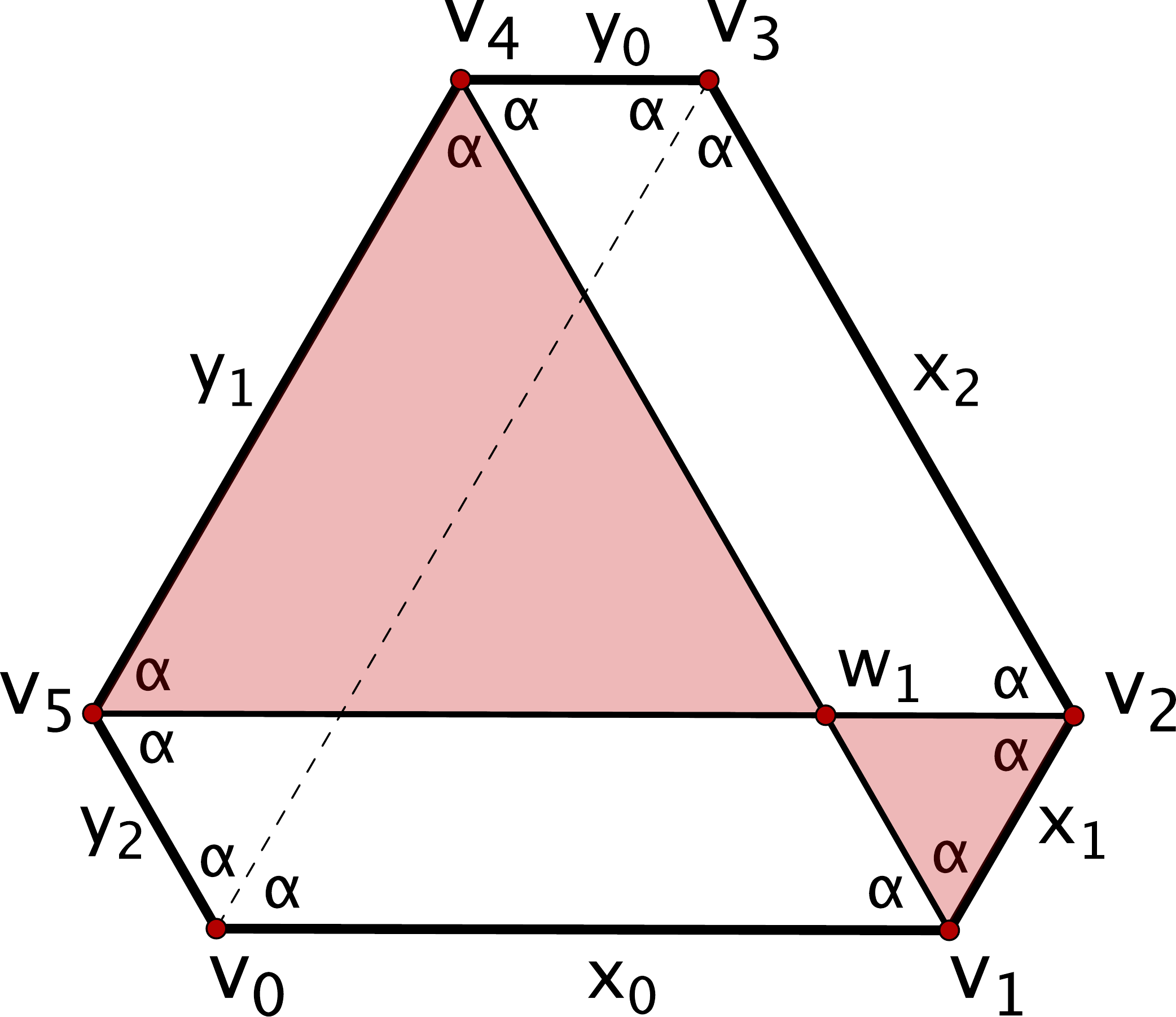}
  \caption{A Gutkin $(6,3)$-gon with side lengths labeled. The shaded region is
  $B_1$.}
  \label{fig:6-3-B1}
\end{figure}

 Notice that $P$ is comprised of $k$ polygons $B_i$'s. Let $x_i$ denote the length of $\overline{v_i v_{i+1}}$ for $0 \leq i \leq k-1$, and let $y_i$ denote the length of
$\overline{v_{i+k} v_{i+k+1}}$, where $0 \leq i \leq k-1$. Note that $x_i$ and $y_i$ denote length of the non-intersecting sides of $B_i$.

Assume that $v_{0}$ is at the origin and $v_{1}$ lies on the positive $x$ axis, and recall that the vertices are labeled in counter-clockwise order. This factors out the action of the isometry group of the plane. 
We shall show that $x_0,\ldots,x_{k-1}$ uniquely determine $y_0,\ldots,y_{k-1}$ and study the condition that these sides  form a closed polygon.

Since the diagonals have fixed length equal to $1$, one has $y_i = 2 \cos \alpha - x_i$. Also, $v_k$ is at the point $(\cos \alpha,  \sin \alpha)$.  Viewing the sides of $G(2k,k)$ as vectors, the $i^{th}$ side is  $x_i(\cos i \theta, \sin i \theta)$, where $\theta = \pi - 2 \alpha = \pi /k$, and the sum of these vectors must be equal to $v_k$. Thus
\begin{equation} \label{xcond}
\sum_{i=0}^{k-1} x_i(\cos i \theta, \sin i \theta) = ( \cos \alpha, \sin
\alpha)
\end{equation}
If the side lengths $x_0,\dotsc,x_{k-1},y_0, \dotsc, y_{k-1}$ form a closed polygon then the sides with lengths $y_i$ must start at $v_k$ and end at $v_0$. In other words, the side lengths satisfy
\begin{equation} \label{ycond}
v_k + \sum_{i=0}^{k-1} y_i(\cos (\pi + i \theta), \sin ( \pi + i \theta)) = v_0.
\end{equation}
Simplifying the left hand side yields
\begin{align*}
(\cos \alpha, \sin \alpha) +& \sum_{i=0}^{k-1} y_i (- \cos i \theta,  - \sin i \theta) =\\
 (\cos \alpha, \sin \alpha) -& \sum_{i=0}^{k-1} (2 \cos \alpha - x_i) (\cos i
\theta, \sin i \theta)=\\
 (\cos \alpha, \sin \alpha) -&  2 \cos \alpha \sum_{i=0}^{k-1} (\cos i
\theta, \sin i \theta) +   \sum_{i=0}^{k-1} x_i (\cos i
\theta, \sin i \theta) =\\
 (\cos \alpha, \sin \alpha) -& 2 \cos \alpha (1, \tan \alpha) +  
(\cos \alpha, \sin \alpha)
= (0,0) = v_0.
\end{align*}
Thus (\ref{xcond}) implies (\ref{ycond}).

Hence, $G(2k,k)$ is determined by the $k$-tuple $x_0,\ldots,x_{k-1}$ satisfying 
the  two linear equations (\ref{xcond}). This concludes the proof. 
\end{proof}

 Next we consider other equiangular cases.

\begin{proposition} \label{thm:discretetangent}
The quotient space of the space of equiangular Gutkin $(n,k)$-gons by the group of similarities is identified with  
the intersection of an $M$-dimensional affine subspace with  $\mathbb{R}^n_+$, where $M$ is equal to the number of positive integers $2 \le r \le n-2$ satisfying the equation
\begin{equation} \label{restr2}
\tan \left(\frac{kr \pi}{n}\right) \tan \left(\frac{\pi}{n}\right) = \tan \left(\frac{k \pi}{n}\right) \tan \left(\frac{r \pi}{n}\right). 
\end{equation}
\end{proposition}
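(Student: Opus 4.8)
The plan is to exploit the fact that an equiangular $n$-gon has all of its side directions prescribed, so that it is determined up to translation by its vector of side lengths $(l_0,\dots,l_{n-1})\in\mathbb{R}^n$, and to reduce the Gutkin condition to a cyclically invariant system of linear equations on this vector which can be diagonalized by the discrete Fourier transform.

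First I would fix the standard side directions: writing $\zeta=e^{2\pi\sqrt{-1}/n}$ and $\theta=2\pi/n$, an equiangular polygon traversed counterclockwise has $v_{j+1}-v_j=l_j\zeta^j$, and its closure is the single complex condition $\sum_{j=0}^{n-1}l_j\zeta^j=0$. Next I would record that, since the contact angle is $\pi(k-1)/n=(k-1)\theta/2$, the $k$-diagonal $v_iv_{i+k}$ has both contact angles equal to $\alpha$ if and only if it points in the bisecting direction $\zeta^{i+(k-1)/2}$ of the fan of directions $\zeta^i,\dots,\zeta^{i+k-1}$. Because the diagonal vector is $v_{i+k}-v_i=\sum_{m=0}^{k-1}l_{i+m}\zeta^{i+m}$, vanishing of its component orthogonal to the bisector is the real linear equation
$$\sum_{m=0}^{k-1} w_m\, l_{i+m}=0,\qquad w_m=\sin\!\Big(\big(m-\tfrac{k-1}{2}\big)\theta\Big),$$
one for each $i\in\mathbb{Z}/n$; here the weights are antisymmetric, $w_{k-1-m}=-w_m$, and since $2\le k\le n/2$ the parallel component is automatically positive, so no inequality beyond $l\in\mathbb{R}^n_+$ is produced. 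Thus the equiangular Gutkin $(n,k)$-gons are exactly the positive solutions of the closure equation together with this cyclic system.

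The heart of the argument is to diagonalize the cyclic system by the discrete Fourier transform. Writing $l_j=\sum_{r=0}^{n-1}c_r\zeta^{rj}$ with $c_{n-r}=\overline{c_r}$, the closure condition becomes $c_1=c_{n-1}=0$, while each linear equation above becomes $c_rW_r=0$ for all $r$, where $W_r=\sum_{m=0}^{k-1}w_m\zeta^{rm}$. Summing the two geometric series hidden in $W_r$ gives
$$W_r=\frac{\zeta^{r(k-1)/2}}{2\sqrt{-1}}\left(\frac{\sin\frac{(r+1)k\pi}{n}}{\sin\frac{(r+1)\pi}{n}}-\frac{\sin\frac{(r-1)k\pi}{n}}{\sin\frac{(r-1)\pi}{n}}\right),$$
so a Fourier mode $r$ may be switched on precisely when the bracket vanishes. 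Clearing denominators (legitimate for $2\le r\le n-2$, where the sines are nonzero) and applying product-to-sum identities, I would show this vanishing is equivalent to equation (\ref{restr2}), checking the finitely many $r$ for which a relevant cosine vanishes directly against the sine form. Together with $W_0=0$ (automatic since $\sum_m w_m=0$), this shows the admissible modes are $r=0$ and the $M$ values $r\in\{2,\dots,n-2\}$ solving (\ref{restr2}); counting real dimensions (a conjugate pair $\{r,n-r\}$ or a self-paired $r=n/2$ contributing one real dimension per index) yields a solution space of real dimension $M+1$.

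Finally I would quotient by similarities. Translations act trivially on side lengths, and the rotational freedom has been used to fix the directions to $\{\zeta^j\}$ up to a finite relabeling, so only scaling $l\mapsto\lambda l$ acts with positive-dimensional orbits; slicing by the affine normalization $c_0=1$ (equivalently, fixing the perimeter) meets each scaling orbit in the convex region exactly once, since the average side length $c_0$ is positive. This identifies the quotient with the intersection of an $M$-dimensional affine subspace of $\mathbb{R}^n$ with $\mathbb{R}^n_+$. I expect the main obstacle to be the explicit reduction of the admissibility condition $W_r=0$ to the trigonometric equation (\ref{restr2}), including the careful bookkeeping of the degenerate cases where a tangent is infinite; the Fourier diagonalization and the dimension count are then routine.
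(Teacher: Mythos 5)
Your proposal is correct and follows essentially the same route as the paper: parametrize the equiangular polygon by its side-length vector, express the Gutkin condition as a circulant linear system (the diagonal must point along the bisecting direction $\zeta^{i+(k-1)/2}$), diagonalize by the discrete Fourier transform, and reduce the vanishing of the $r$-th eigenvalue to equation (\ref{restr2}). The only cosmetic difference is that you kill the modes $r=1,n-1$ via the closure condition, whereas the paper shows directly that $\lambda_1,\lambda_{n-1}\neq 0$.
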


\begin{proof}
Let $P \in G(n,k)$ be embedded in the complex plane, with $v_0 = 0$ and $v_1$ on the positive real axis. Let $x_i = |v_{i+1} - v_i|$ for $0 \leq i \leq k-1$ be the side lenths of $P$. Let $\omega = \exp ( 2\pi/n)$. Notice that $v_{i+1} - v_i = x_i \omega^i$, and a diagonal can be
represented as 
\begin{equation} \label{eq:diagonal0} v_{i+k} - v_i = a_i \omega ^{i+m}
\end{equation} where $a_i \in \mathbb{R}$,
$a_i > 0$, and $m = (k-1)/2$. Notice that in this representation, 
\begin{align*}
\arg(v_{i+1} - v_i) &= (2 \pi i)/n, \\ 
\arg(v_{i+k} - v_{i+k-1}) &= 2,
\pi(i+k-1)/n \\ \arg(v_{i+k} - v_i) &= \pi (2i + k -1)/n.
\end{align*}
Then 
$$\angle v_{i+1} v_i v_{i+k} = \angle v_{i+k-1} v_{i+k} v_i = \pi(k-1)/n = \alpha.$$
 Moreover,
\begin{align*}
v_{i+k} - v_i &= (v_{i+k} - v_{i+k-1}) + (v_{i+k-1} - v_{i+k-2})  + \dotsb +
(v_{i+1} - v_i) \\
&= \omega^{i+k-1} x_{i+k-1}  + \omega^{i+k-2} x_{i+k-2} + \dotsb + \omega^i
x_i\\
&= \omega^i x_i + \omega^{i+1} x_{i+1} \ + \dotsb + \omega^{i+k-1} x_{i+k-1}.
\end{align*}

From  (\ref{eq:diagonal0}), $v_{i+k} - v_i$ is also equal
to $a_i \omega ^{i+m}$. Thus
\begin{align*}
a_i \omega ^{i+m} &= \omega^i x_i + \omega^{i+1} x_{i+1} \ + \dotsb +
\omega^{i+k-1} x_{i+k-1}\\
a_i &= \omega^{-m} x_i + \omega^{1-m} x_{i+1} + \dotsb + \omega^{k-1-m} x_{k-1}.
\end{align*}
Using $a_i - \overline{a_i} = 0$, one has
\[
(\omega^{-m} - \omega^m) x_i + (\omega^{1-m} - \omega^{m-1}) x_{i+1} + \dotsb +
(\omega^{k-1-m} - \omega^{m-k+1}) x_{k-1} = 0.
\]
This gives a system of $n$ linear equations on variables $x_i$. The coefficient matrix, $A$, is a circulant matrix where the first row is equal to 
\[
\begin{pmatrix}
\omega^{-m}- \omega^m, & \omega^{1-m}- \omega^{m-1}, & \dotsb, & \omega^{k-1-m}-
\omega^{m-k+1}, &0, &0, &\dotsb, &0
\end{pmatrix}.
\]
Then the eigenvalues of $A$ are 
\begin{equation} \label{eq:eigenvalue1}
\lambda_r = \sum_{\nu=0}^{k-1} (\omega^{\nu-m} - \omega^{m-\nu}) \omega^{\nu r},
\end{equation}
see \cite{Da}.

We expect one of the eigenvalues to be equal to zero because we have not factorized by scaling yet. If no other eigenvalue equals zero then only  trivial solutions exist. 
Now, we compute $\lambda_r$ in three cases: $r=0$, $r=1$ or $r=n-1$, and $2\le r \le n-2$. 

For $r=0$, we have 
\[\lambda_0 = \omega^{-m} \sum_{\nu=0}^{k-1} \omega^{\nu} - \omega^m.\] 
Let $h$ be equal to $|\omega^i + \dotsb + \omega^{i+k}|$. By rotational symmetry, $h$ does not vary with $i$. Now evaluating the above equation, 
\begin{align*}
\lambda_0 = h \omega^{-m} \omega^m - h \omega^m \omega^{-m} = 0. 
\end{align*}
Thus for $r =0$, $A$ has eigenvalue $\lambda_0$ equal to zero. 

For all  other $r$, assume that $\lambda_r$ is equal to zero. Set 
($\ref{eq:eigenvalue1}$) to zero and simplify:
\begin{equation} \label{eq:eigenvalue2}
\sum_{\nu=0}^{k+1} \omega^{(r+1) \nu} = \omega^{k-1} \sum_{\nu=0}^{k-1}
\omega^{(r-1) \nu}.
\end{equation}

For  $r = 1$, equation (\ref{eq:eigenvalue2}) can be written as $k
\omega^{k-1} = \sum_{\nu=0}^{k-1} \omega^{2\nu}$. Then $k = |\sum_{\nu=0}^{k-1} \omega^{2\nu}|$. This is true only if the $\omega^{2\nu}$'s are collinear, which is clearly not the case. Thus $\lambda_1\neq 0$, and likewise for $r=n-1$.

For $2\le r \le n-2$, using geometric series, we can rewrite  \ref{eq:eigenvalue2}
as
\begin{equation} \label{eq:eigenvalue3}
\frac{\omega^{k(r+1)} - 1}{\omega^{r+1} - 1} = \omega^{k-1}
\frac{\omega^{k(r-1)} - 1}{\omega^{r-1} - 1}.
\end{equation}
After expanding this equation in terms of sines and cosines and using
trigonometric identities, one rewrites it as (\ref{restr2}). 
For any solution $r$, one obtains $\lambda_r = 0$. This implies the claim.
\end{proof}

We are ready to prove Theorem \ref{mainpol}.

If $n$ and $k-1$ are coprime then a Gutkin polygon is equiangular by Corollary \ref{cor:angle}.  In \cite{connelly}, Connelly and Csik{\'o}s show that a solution to (\ref{restr2}) for integer values $1 < k, r < n/2$ must satisfy $k + r = n/2$ and $n|(k-1)(r-1)$. Since $n$ and $k-1$ are coprime, there are no solutions. Note also that if $r$ is a solution, so is $n-r$. Thus, by the Proposition \ref{thm:discretetangent}, the matrix $A$ has corank $1$ and the Gutkin polygon must be regular.

It remains to construct a non-trivial Gutkin polygon for non-coprime $n$ and $k-1$. Let $p = \gcd(n,k-1)$ and $q=n/p$. Choose angles
$\theta_1, \dotsc, \theta_p$ such that $\theta_1 + \dotsb + \theta_p = 2 \pi / q$. Divide a unit circle into $q$ equal parts, and divide each of these equal arcs into $p$ arcs of lengths $\theta_1, \dotsc, \theta_p$, in this order. One obtains an inscribed $n$-gon.
See Figure \ref{constr} for $n=4, k=3$.

\begin{figure}
\centering
\includegraphics[height=2.2in]{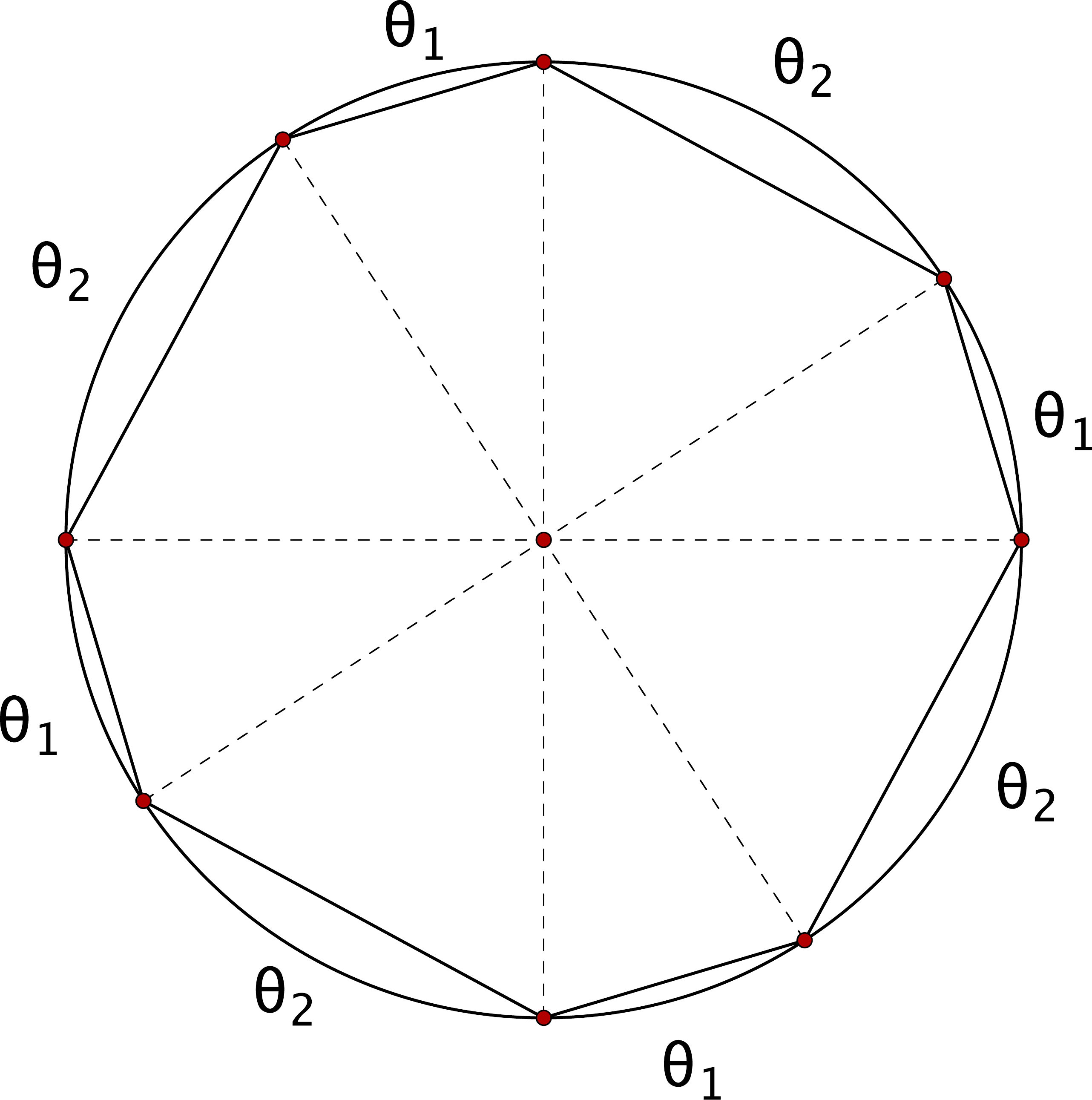}
\caption{\label{constr} Constructing a non-trivial Gutkin polygon}
\end{figure}

\begin{lemma}
The constructed $n$-gon is a Gutkin polygon.
\end{lemma}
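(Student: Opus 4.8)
The plan is to exploit the fact that the constructed $n$-gon $P$ is inscribed in a circle, so that every contact angle can be read off via the inscribed angle theorem and reduced to a sum of elementary arc lengths.

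First I would record the combinatorial structure of the arcs. Labeling the vertices $v_0, \dotsc, v_{n-1}$ in cyclic (counterclockwise) order, the elementary arc between consecutive vertices $v_i$ and $v_{i+1}$ has length $\theta_{(i \bmod p) + 1}$; that is, the sequence of elementary arcs is the block $(\theta_1, \dotsc, \theta_p)$ repeated $q$ times. In particular the arc lengths are periodic with period $p$, and the sum over any $p$ consecutive elementary arcs equals one full block, $\theta_1 + \dotsb + \theta_p = 2\pi/q$. Since there are $q$ blocks, the arcs sum to $2\pi$, and since all $\theta_j > 0$ the vertices are distinct and traversed once around the circle, so $P$ is convex.

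Next I would compute the contact angle at $v_i$ along the $k$-diagonal $|v_i v_{i+k}|$. By the inscribed angle theorem, $\angle v_{i+1} v_i v_{i+k}$ equals half the arc from $v_{i+1}$ to $v_{i+k}$ \emph{not} containing $v_i$, which is exactly the sum of the $k-1$ elementary arcs joining $v_{i+1}, v_{i+2}, \dotsc, v_{i+k}$. Applying the same argument at the other endpoint, $\angle v_{i+k-1} v_{i+k} v_i$ equals half the sum of the $k-1$ elementary arcs joining $v_i, \dotsc, v_{i+k-1}$. Identifying the correct arc (the one not containing the relevant vertex) is what guarantees a window of exactly $k-1$ consecutive elementary arcs, and is the only point needing care.

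Here is the crux. Since $p = \gcd(n, k-1)$ divides $k-1$, write $k-1 = pm$. Then any sum of $k-1$ consecutive elementary arcs consists of exactly $m$ complete blocks, independently of the starting index, and hence equals $m \cdot 2\pi/q = 2\pi(k-1)/(pq) = 2\pi(k-1)/n$. Consequently both contact angles equal $\pi(k-1)/n = \alpha$ for every $i$, which is precisely the Gutkin condition. The divisibility $p \mid (k-1)$ is exactly what makes these windowed sums translation-invariant, so this is the step where the hypothesis that $n$ and $k-1$ are not coprime is essential; it is the heart of the argument. Finally, because $n$ and $k-1$ are not coprime we have $p \geq 2$, so we may choose the $\theta_j$ not all equal, making $P$ non-regular and hence a genuine non-trivial Gutkin $(n,k)$-gon.
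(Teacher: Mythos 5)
Your proof is correct and follows the same route as the paper's: both rest on the inscribed angle theorem applied to the circumscribing circle, with the divisibility $p \mid (k-1)$ making the windowed arc sums translation-invariant. In fact your version is slightly more careful: the arc subtended by a $k$-diagonal consists of $k-1$ consecutive elementary arcs, i.e.\ $(k-1)/p$ full blocks, so the contact angle is $\frac{k-1}{p}\cdot\frac{\pi}{q}=\pi(k-1)/n$, matching the value forced by the earlier proposition; the paper's displayed value $\frac{\theta_1+\dotsb+\theta_p}{2}=\frac{\pi}{q}$ is the special case of a single block (i.e.\ $(k-1)\mid n$), though the substance of the argument --- that both contact angles are equal and independent of $i$ --- is unaffected.
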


\begin{proof}
The angular measure of an inscribed angle is half that of the subtended arc. It follows that 
$$
\angle v_{i+1}v_{i}v_{i+k} =\angle v_{i+k-1}v_{i+k}v_{i} = \frac{\theta_1 + \dotsb + \theta_p}{2} = \frac{\pi}{q}.
$$
\end{proof}

Since the choice of the angles $\theta_1, \dotsc, \theta_p$ was arbitrary, we obtain a $p-1$-parameter family of pairwise non-similar Gutkin polygons.

\bigskip
{\bf Acknowledgments}. This work was done during the  Summer@ICERM 2013 program, and it is a result of collaboration between  undergraduate students and their  advisors. We are grateful to ICERM for its support and hospitality. 
The figures in this paper were made in Cinderellla 2.0. We would like to thank Michael Bialy, Peter J. Lu and Charles Grinstead for interesting discussions.
S. T. was  supported by the NSF grant DMS-1105442.

\end{document}